 \def\ps@pprintTitle{%
 	\let\@oddhead\@empty
 	\let\@evenhead\@empty
 	\def\@oddfoot{\footnotesize\itshape
 		{} \hfill\today}%
 	\let\@evenfoot\@oddfoot
 }
\newtheorem{theor}{Theorem}[section]
\newtheorem{prop}[theor]{Proposition}
\newtheorem{cor}[theor]{Corollary}
\newtheorem{lemma}[theor]{Lemma}
\theoremstyle{definition} 
\newtheorem{defin}[theor]{Definition}
\newtheorem{rem}[theor]{Remark}
\newtheorem{ex}{Example}
\newtheorem{exs}{Examples}
\DeclareMathOperator{\Sym}{Sym}
\DeclareMathOperator{\id}{id}
\DeclareMathOperator{\Aut}{Aut}
\DeclareMathOperator{\Ret}{Ret}
\begin{document}

\begin{frontmatter}
	\title{One-generator skew braces and indecomposable set-theoretic solutions to the Yang-Baxter equation
}
	\author{M. CASTELLI}
    \affiliation{University of Salento}
	\ead{ marco.castelli@unisalento.it - marcolmc88@gmail.com}
	
 

\begin{abstract}
We study the class of one-generator solutions to the Yang-Baxter equation, extending some recent results concerning the classes of involutive and multipermutation solutions. Moreover we show the precise relationship between indecomposable solutions to the Yang-Baxter equation and finite one-generator skew braces, giving a positive answer to a question posed by Agata and Alicja Smoktunowicz. In the last part, we apply our results to the involutive case, and we present some numerical results involving solutions of small size. 
\end{abstract}

\begin{keyword}
\texttt{set-theoretic solution\sep Yang-Baxter equation\sep skew brace \sep indecomposable solution \sep q-cycle set }
\MSC[2020] 
16T25\sep 81R50 
\end{keyword}
\end{frontmatter}

\section*{Introduction}

The quantum Yang-Baxter equation first appeared in theoretical physics, in a paper by C.N. Yang \cite{yang1967}, and in statistical mechanics, in R.J. Baxter's work \cite{bax72}. To date, it is the subject of many studies of significant and ongoing interest, even beyond theoretical physics. In 1992, Drinfel'd \cite{drinfeld1992some} suggested the study of the set-theoretic version of this equation. Namely, a \emph{set-theoretic solution of the Yang-Baxter equation} on a non-empty set $X$ is a pair $\left(X,r\right)$, where 
$r:X\times X\to X\times X$ is a map such that the following identity
\begin{align*}
\left(r\times\id_X\right)
\left(\id_X\times r\right)
\left(r\times\id_X\right)
= 
\left(\id_X\times r\right)
\left(r\times\id_X\right)
\left(\id_X\times r\right)
\end{align*}
is satisfied.  
Writing a solution $(X,r)$ as $r\left(x,y\right) = \left(\lambda_x\left(y\right)\rho_y\left(x\right)\right)$, with
$\lambda_x, \rho_x$ maps from $X$ into itself, for every $x\in X$, we say that $(X, r)$ is \emph{non-degenerate} if $\lambda_x,\rho_x\in \Sym_X$, for every $x\in X$, and \emph{involutive} if $r^2=\id_{X\times X}$.\\
The papers of Gateva-Ivanova and Van Den Bergh  \cite{gateva1998semigroups} and Etingov, Schedler, and Soloviev \cite{etingof1998set} have led many mathematicians to study the involutive non-degenerate solutions, with a particular focus on the \emph{indecomposable} ones. The interest in these solutions stems from the fact that they enable the construction of other solutions, not necessarily indecomposable, through appropriate methods such as dynamical extensions and retraction-process (see \cite{etingof1998set,vendramin2016extensions} for more details). 
In this context, remarkable structural results on involutive solutions have been obtained (see, for example, \cite{dietzel2023indecomposable,etingof1998set,JePiZa20x,jedlivcka2021cocyclic} and related references). A successful approach has consisted in studying the solutions through algebraic structures, such as braces, introduced by Rump in \cite{rump2007braces}. A triple $(B,+,\circ)$ is said to be a \emph{brace} if $(B,+)$ is an abelian group, $(B,\circ)$ is a group and $$a\circ (b+c)=a\circ b-a+a\circ c$$
for all $a,b,c\in B$. These algebraic structures have played a crucial role in the study of indecomposable involutive solutions, since several properties of an indecomposable solution reflect to suitable properties of the so-called \emph{permutation braces}, as highlighted for example in \cite{castelli2022characterization,cedo2022indecomposable,cedo2022new,rump2020,smock}.
\\
In recent years, the study of non-involutive solutions has increased in interest. To investigate (not-necessarily indecomposable) non-involutive solutions Guarnieri and Vendramin in \cite{guarnieri2017skew} generalized braces to \emph{skew braces}, in which the abelianity of $(B,+)$ is not assumed. Even if involutive non-degenerate solutions remain a topic of great interest (see for example the recent papers \cite{dietzel2023indecomposable,kanrar2024decomposability} and related references), several results have provided for bijective non-degenerate solutions, and many mathematicians provided a non-involutive version of some celebrated notions and theorems. In that regard, in \cite{EtSoGu01} a notion of decomposability for non-involutive and non-degenerate solutions has been introduced. Moreover, the main result of \cite{EtSoGu01} represents a first significant step by providing a complete classification of indecomposable non-degenerate solutions $(X,r)$ having $X$ of prime size, for which the classification of the involutive ones given in \cite[Theorem 2.12]{etingof1998set} is a special case. Other remarkable results on indecomposable non-involutive solutions were obtained by Gateva-Ivanova in \cite{Ga21}, where she provided tools involving indecomposable solutions that are consistent with the ones provided in \cite{gateva2004combinatorial} for the involutive ones. Further results that arise as generalizations of the theory developed for the involutive ones have been presented by Mazzotta, Stefanelli and myself in \cite{castelli2022simplicity}, where indecomposable solutions have been constructed by dynamical extensions of q-cycle sets, by Jedlicka and Pilitowska in \cite{jedlicka2024diagonals}, where (in)decomposability of solutions has been investigated by the retraction-process, and by Colazzo, Jespers, Kubat and Van Antwerpen in \cite{colazzo2024simple}, where simple solutions have been completely characterized. Some results that have not a counterpart in the involutive setting were obtained investingating the so-called \emph{derived} indecomposable solutions (see for example the recent paper \cite{colazzo2022derived}).\\
The first aim of this paper is to study the family of finite \emph{one-generator} solutions, where a finite solution $(X,r)$ is said to be \emph{one-generator} if there exists an element $x\in X$ such that if $(Y,r_{|Y\times Y})$ is a subsolution containing $x$, then $Y=X$. These solutions will be studied in detail in \cref{studsol}, and we will show that they are related to several well-known families of solutions, such as indecomposable  solutions, multipermutation solutions, and irreducible involutive solutions, recently introduced in \cite{dietzel2025endocabling}.  \\
Our second goal is the study of the relationship between one-generator skew braces and one-generator solutions. Recall that a skew brace $B$ is said to be \emph{one-generator} if there exists $x\in B$ such that $B=B(x)$, where $B(x)$ is the smallest sub-brace of $B$ containing $x$. In the abelian setting of braces, the study of this family of braces is a very active field of research (see for example \cite{ballester2023structure,dixon2025structure,kurdachenko2024structure,rump2020one}). In \cite{smock}, Agata and Alicja Smoktunowicz showed that a one-generator brace $B$ always provides an involutive indecomposable solution $(X,r)$. Moreover, they proved that under additional assumptions, there is a kind of correspondence (see \cite[Theorem 6.5]{smock}). As noted in the Introduction of \cite{rump2020one}, actually there is not a perfect relationship between indecomposable involutive solutions and one-generator braces. As a main application of our theory involving one-generator solutions, we will provide a relationship between finite one-generator skew braces and indecomposable solutions. In that regard, the main theorem of \cref{secques} will provide a positive answer to \cite[Question 6.8]{smock}.\\
In the last part of the paper, we apply our results to braces and involutive solutions. In particular, we will discuss the existence of particular examples (and counterexamples). One-generator solutions having small size will be examined in detail, and we will give some numerical results, obtained by a GAP script based on the package \emph{YangBaxter} \cite{YBGAP}.

\section{Basic definitions and results}

In this section, we provide basic definitions and results useful for understanding the rest of the paper.

\subsection{Set-theoretic solutions and q-cycle sets}
We start by exploiting the existing one-to-one correspondence between non-degenerate solutions and non-degenerate q-cycle sets, algebraic structures introduced by Rump \cite{rump2019covering}. 

\begin{defin}
    A non-empty set $X$ endowed with two binary operations $\cdot$ and $:$ is said to be a \emph{q-cycle set} if the map $\sigma_x:X\rightarrow X, y\mapsto x\cdot y$ is bijective, for every $x\in X$, and the following conditions
\begin{align}
\left(x\cdot y\right)\cdot \left(x\cdot z\right) 
&= \left(y:x\right)\cdot \left(y\cdot z\right)\label{ug1}\tag{q1}\\
\left(x:y\right):\left(x:z\right) 
&= \left(y\cdot x\right):\left(y:z\right)\label{ug2}\tag{q2}\\
\left(x\cdot y\right):\left(x\cdot z\right) 
&= \left(y:x\right)\cdot \left(y: z\right)\label{ug3}\tag{q3}
\end{align}
hold, for all $x,y,z\in X$. Besides, $X$ is \emph{regular} if the map $\delta_x:X\rightarrow X, y\mapsto x:y$ is bijective, for every $x\in X$; \emph{non-degenerate} if $X$ is regular and the squaring maps, i.e. the maps $\mathfrak{q},\mathfrak{q'}:X\longrightarrow X$ given by
$\mathfrak{q}\left(x\right):=x\cdot x$ and $\mathfrak{q'}\left(x\right):=x: x$,
for every $x \in X$, are bijective.
\end{defin}
Thus, if $X$ is a non-degenerate q-cycle set, then $\left(X,r\right)$ is a non-degenerate bijective solution, where $r:X\times X\to X\times X$ is the map defined by 
\begin{align*}
r\left(x,y\right)= \left(\sigma_{x}^{-1}\left(y\right),\delta_{\sigma_{x}^{-1}\left(y\right)}\left(x\right)\right),
\end{align*}
for all $x,y\in X$. 
Vice versa, if $\left(X,r\right)$ is a non-degenerate bijective solution, if we set 
\begin{align*}
  x\cdot y:=\lambda_x^{-1}\left(y\right) 
  \qquad
  \text{and}
  \qquad
  x:y:=\rho_{\lambda_y^{-1}\left(x\right)}\left(y\right), 
\end{align*}
for all $x,y\in X$, then $X$ is a non-degenerate q-cycle set
(cf. \cite[Proposition 1]{rump2019covering}).
Evidently, if $X$ is a q-cycle set such that the operations $\cdot$ and $:$ coincide, then $X$ is a \emph{cycle set}. Cycle sets were introduced by Rump in \cite{rump2005decomposition} and have been extensively investigated (see, for instance, \cite{lebed2017homology,rump2022class,vendramin2016extensions} and related references). Of course, if $X$ is a cycle set, we have that $\sigma_x$ coincides with $\delta_x$, for all $x\in X$. Involutive non-degenerate solutions correspond to non-degenerate cycle sets, and the correspondence is nothing other than the previous one restricted to the involutive setting. A sub-q-cycle set of a q-cycle set $X$ is a subset $Y$ of $X$ such that with the binary operations induced by $\cdotp$ and $:$ is again a q-cycle set.\\
A function $f$ from an indecomposable q-cycle set $(X,\cdotp,:)$ to a q-cycle set $(Y,\cdotp',:')$ is said to be \emph{homomorphism} if $f(x\cdotp y)=f(x)\cdotp' f(y)$ and $f(x:y)=f(x):'f(y)$ for all $x,y\in X$. Moreover, a surjective homomorphism is said to be \emph{epimorphism} and a bijective homomorphism is called \emph{isomorphism}. As with other algebraic structures, epimorphisms are related to the notion of a congruence, where an equivalence relation $\sim$ of a q-cycle set $X$ is said to be a \emph{congruence} if $x\sim y $ and $x'\sim y'$ imply $x\cdotp x'\sim y\cdotp y'$ and $x: x'\sim y: y'$. If $X$ has a finite size, then the quotient $X/\sim$ can be endowed with a canonical q-cycle set structure for which the map $x\mapsto [x]_{\sim}$ is a q-cycle sets epimorphism. A well known congruence is the so-called \emph{retract relation}, which is given by
$$x\sim_{\Ret} y:\Longleftrightarrow \sigma_x=\sigma_y\quad and \quad \delta_x=\delta_y $$
for all $x,y\in X$. By the main result of \cite{JeP18}, the quotient $X/\sim_{\Ret}$, which we will indicate by $\Ret(X)$, is a non-degenerate q-cycle set even if $X$ has not finite size, provided $X$ non-degenerate. If $x$ is an element of $X$, we will indicate by $\Ret(x)$ the equivalence class of $x$ respect to $\sim_{\Ret}$. Now, if $X$ is a non-degenerate cycle set, we can define inductively $\Ret^n(X)$ as 
$$\Ret^0(X):=X \quad and \quad \Ret^n(X):=\Ret(\Ret^{n-1}(X)) $$
for all $n\in \mathbb{N}$. For our purposes, it is useful to introduce the following definition.

\begin{defin}
    Let $X$ be a finite q-cycle set. Then, we will call \emph{absolute retraction} of $X$ the q-cycle set $\Ret^m(X)$, where $m$ is the smallest natural number such that $\Ret^{m}(X)\cong \Ret^{m+1}(X)$.
\end{defin}

\noindent Note that the previous definition is consistent with the one given by Rump in \cite{rump2023primes}, where he considered (not necessarily finite) cycle sets. If the absolute retraction of a finite q-cycle set has size $1$, we will say that $X$ is a \emph{multipermutation} q-cycle set, while if the absolute retraction is isomorphic to $X$, we will say that $X$ is an \emph{irretractable} q-cycle set.

\begin{defin}
    Let $(X,\cdotp,:)$ be a non-degenerate q-cycle set. Then, $X$ is said to be \emph{indecomposable} if there is no non-trivial partition $Y\cup Z$ of $X$ such that $Y$ and $Z$ are sub-q-cycle sets of $X$. 
\end{defin}

\noindent By results contained in \cite{castelli2022simplicity,EtSoGu01}, we know that a finite q-cycle set $X$ is indecomposable if and only if the group generated by the set $\{\sigma_x|x\in X \}\cup \{\delta_x|x\in X \}$ acts transitively on $X$. A q-cycle set that is not indecomposable will be called \emph{decomposable}. As one can expect, indecomposable q-cycle sets correspond to indecomposable solutions.
\begin{rem}
    Throughout the paper, we will use the language of q-cycle sets, but all the results can be easily translated into terms of solutions by the correspondence described above. In the context of involutive solutions, we will simply refer to cycle sets. 
\end{rem}

An important notion involving q-cycle sets is the dynamical pair, introduced in \cite{CaCaSt20}, that is a useful tool to construct new q-cycle sets. Specifically, given a q-cycle set $X$, a set $S$, two maps $\alpha:X\times X\times S\to\Sym_S$ and $\alpha':X\times X\times S\to S^S$, where $S^S$ is the set of all the maps from $S$ into itself, the pair $\left(\alpha,\alpha'\right)$ is called a \emph{dynamical pair} if the following equalities
\begin{align}\label{coci}
\alpha_{\left(x\cdot y\right),\left(x\cdot z\right)}\left(\alpha_{\left(x, y\right)}\left(s,t\right),\alpha_{\left(x,z\right)}\left(s,u\right)\right)&=\alpha_{\left(y:x\right),\left(y\cdot z\right)}\left(\alpha'_{\left(y,x\right)}\left(t,s\right),\alpha_{\left( y,z\right)}\left(t,u\right)\right) \notag\\
\alpha'_{\left(x: y\right),\left(x: z\right)}\left(\alpha'_{\left(x,y\right)}\left(s,t\right),\alpha'_{\left(x,z\right)}\left(s,u\right)\right)&=\alpha'_{\left(y\cdot x\right),\left(y:z\right)}\left(\alpha_{\left(y,x\right)}\left(t,s\right),\alpha'_{\left(y,z\right)}\left(t,u\right)\right)\\
\alpha'_{\left(x\cdot y\right),\left(x\cdot z\right)}\left(\alpha_{\left(x,y\right)}\left(s,t\right),\alpha_{\left(x,z\right)}\left(s,u\right)\right)&=\alpha_{\left(y: x\right),\left(y: z\right)}\left(\alpha'_{\left(y,x\right)}\left(t,s\right),\alpha'_{\left(y,z\right)}\left(t,u\right)\right)\notag
\end{align}
hold, for all $x,y,z\in X$ and $s,t,u\in S$.\\
As shown in \cite[Theorem 16]{CaCaSt20}, the triple $(X\times S,\cdot,:)$ where
\begin{align*}
(x,s)\cdot (y,t)
:=(x\cdot y,\alpha_{(x,y)}(s,t)) 
\quad and \quad
\left(x,s\right): \left(y,t\right):=\left(x: y,\alpha'_{\left(x,y\right)}\left(s,t\right)\right),
\end{align*}
for all $x,y\in X$ and $s,t\in S$, is a q-cycle set. 
If $X$ is regular and $\alpha'_{\left(x,y\right)}\left(s,-\right)\in\Sym_S$, for all $x,y\in X$ and $s\in S$, then  $\left(X\times S,\cdot,:\right)$ is regular. Moreover, the converse is true if $X$ and $S$ have finite order. The q-cycle set $X\times_{\alpha,\alpha'} S:=(X\times S,\cdot,:)$ is said to be a \emph{dynamical extension} of $X$ by $S$. The following result, which closes the subsection, is of crucial importance for our purposes. 

\begin{theor}[Theorem 3.3, \cite{castelli2022simplicity}]\label{cov}
If $X$ is an indecomposable q-cycle set and $p:X \to Y$ is an epimorphism from $X$ to a q-cycle set $Y$, then there exist a set $S$ and a dynamical
pair $(\alpha,\alpha')$ such that $X\cong Y\times_{\alpha,\alpha'} S$. 
\end{theor}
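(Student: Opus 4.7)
The plan is to recognise this as a ``coordinatising'' result: if we can show that all fibres of $p$ have the same cardinality, we can identify $X$ with $Y\times S$ as a set, and then simply read off from the q-cycle set operations on $X$ what the dynamical maps $\alpha,\alpha'$ must be.

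\textbf{Step 1: Fibres have constant cardinality.} For any $x,x'\in X$, since $p$ is an epimorphism and $\sigma_x$ is bijective, for every $y\in Y$ the map $\sigma_x$ restricts to a bijection $p^{-1}(y)\to p^{-1}(p(x)\cdot y)$; indeed $p(x\cdot z)=p(x)\cdot p(z)$. The analogous statement holds for $\delta_x$. Since $X$ is indecomposable, the group generated by all $\sigma_x,\delta_x$ acts transitively on $X$, hence the image of its action on the fibres is transitive on $Y$, and all fibres have a common cardinality. Call a set of that cardinality $S$ and fix, for each $y\in Y$, a bijection $\phi_y\colon S\to p^{-1}(y)$.

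\textbf{Step 2: Transport the structure.} Define the bijection $\Phi\colon Y\times S\to X$ by $\Phi(y,s):=\phi_y(s)$. Since $p$ is a homomorphism, $\phi_y(s)\cdot\phi_{y'}(t)\in p^{-1}(y\cdot y')$ and $\phi_y(s):\phi_{y'}(t)\in p^{-1}(y:y')$, so there exist unique elements $\alpha_{(y,y')}(s,t),\alpha'_{(y,y')}(s,t)\in S$ with
\begin{align*}
\phi_y(s)\cdot\phi_{y'}(t)&=\phi_{y\cdot y'}\bigl(\alpha_{(y,y')}(s,t)\bigr),\\
\phi_y(s):\phi_{y'}(t)&=\phi_{y:y'}\bigl(\alpha'_{(y,y')}(s,t)\bigr).
\end{align*}
These formulas define the maps $\alpha,\alpha'$. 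The fact that $\sigma_{\phi_y(s)}$ restricts to a bijection $p^{-1}(y')\to p^{-1}(y\cdot y')$ forces $t\mapsto\alpha_{(y,y')}(s,t)$ to lie in $\Sym_S$, giving $\alpha\colon Y\times Y\times S\to\Sym_S$; regularity of $X$ gives the corresponding statement for $\alpha'$ (so in particular $\alpha'\colon Y\times Y\times S\to S^S$).

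\textbf{Step 3: Check the dynamical pair axioms.} By construction, $\Phi$ is a set bijection and the operations it transports to $Y\times S$ are precisely the ones defining $Y\times_{\alpha,\alpha'}S$. The three displayed identities in the definition of a dynamical pair are then nothing but the three axioms \eqref{ug1}, \eqref{ug2}, \eqref{ug3} applied to the elements $\phi_x(s),\phi_y(t),\phi_z(u)\in X$: each q-cycle set identity in $X$, rewritten via $\Phi^{-1}$, produces an equality in $Y\times S$ whose first coordinate is the corresponding q-cycle set identity for $Y$, and whose second coordinate is the claimed identity for $(\alpha,\alpha')$. Hence $(\alpha,\alpha')$ is a dynamical pair and $\Phi$ is an isomorphism $Y\times_{\alpha,\alpha'} S\cong X$.

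The non-trivial point is Step~1: without indecomposability, different fibres of $p$ could have different sizes and no single indexing set $S$ would work. Once constant fibre size is established via the transitivity of the group generated by the $\sigma_x$ and $\delta_x$, the remainder of the argument is a direct translation of the axioms via the chosen bijections $\phi_y$.
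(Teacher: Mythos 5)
The paper does not prove this statement at all---it imports it verbatim as Theorem 3.3 of \cite{castelli2022simplicity}---so there is no internal proof to compare against; judged on its own, your argument is correct and is exactly the standard ``constant fibre cardinality, choose bijections $\phi_y$, read off the dynamical cocycle'' proof used in that reference (and going back to Etingof--Schedler--Soloviev and Vendramin in the involutive case). The only thing worth flagging is that your Step 1 silently uses the finite, regular (non-degenerate) setting, both for the $\delta_x$ to be bijections and for the equivalence between indecomposability and transitivity of the group generated by $\{\sigma_x,\delta_x\}$, which is the setting the paper itself works in.
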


\subsection{Skew braces}

In this subsection, we introduce skew braces and we recall some basic relations with q-cycle sets.

\begin{defin}[Definition $1.1$, \cite{guarnieri2017skew}]
A triple $(B,+,\circ)$ with two binary operations is said to be a \emph{skew brace} if $(B,+)$ and $(B,\circ)$ are groups and $a\circ (b+c)=a\circ b-a+a\circ c$ for all $a,b,c\in B$. $(B,+)$ is called the \emph{additive group}, and $(B,\circ)$ is called the \emph{multiplicative group}. Moreover, the inverse of an element $a\in B$ in the additive (resp. multiplicative) group will be indicated by $-a$ (resp. $a^-$).
\end{defin}

\begin{ex}\label{primies}
\begin{itemize}
    \item[1)] Let $(B,+)$ be a group and $\circ$ be the binary operation on $B$ given by $a\circ b:= a+b$ for all $a,b\in B$. Then, the triple $(B,+,\circ)$ is a skew brace. 
    \item[2)] Let $B:=(\mathbb{Z}/p^2\mathbb{Z},+)$ and $\circ$ be the binary operation on $B$ given by $a\circ b:=a+b+p\cdotp a\cdotp b$ (where $\cdotp$ is the ring-multiplication of $\mathbb{Z}/p^2\mathbb{Z}$). Then, $(B,+,\circ)$ is a skew brace.
\end{itemize}
\end{ex}

\noindent Given a skew brace $B$ and $a\in B$, let us denote by $\lambda_a,\delta_a:B\longrightarrow B$ the maps from $B$ into itself defined by $\lambda_a(b):= - a + a\circ b,$ and $\delta_a(b):=a\circ b-a$ for all $b\in B$. Then, $\lambda_a,\delta_a\in\Aut(B,+)$, for every $a\in B$; and the map $\lambda$ (resp. $\delta$) from $B$ to $Aut(B,+)$ given by $\lambda(a):=\lambda_a$ (resp. $\delta(a):=\delta_a$) is a homomorphism (resp. anti-homomorphism) from $(B,\circ)$ to $\Aut(B,+)$. If $B$ is a skew brace, then the operations $\cdotp $ and $:$ given by $a\cdotp b:=\lambda_{a^{-}}(b)$ and $a:b:=\delta_{a^{-}}(b)$ give rise to a q-cycle set. Moreover, $B$ is a skew brace with abelian additive group if and only if the operations $\cdotp$ and $:$ coincide.

\begin{rem}
    Through the paper, we will refer to a skew brace with an abelian additive group as a brace. Note that, in this case, the q-cycle sets operations $\cdotp$ and $:$ associated with $B$ coincide.
\end{rem}

A subset $X$ of $B$ is said to be a \emph{cycle base} if it is a union of orbits with respect to the action of the group generated by the set $\{\lambda_a|a\in B \}\cup \{\delta_a|a\in B \}$ and additively (or equivalently multiplicatively) generates $B$. A cycle base is called \emph{transitive} if it consists of a single orbit. It is easy to show that a transitive cycle base is an indecomposable sub-q-cycle set of $B$.

From now on, for a finite skew brace $B$ and an element $x\in B$, we denote by $B(x)$ the smallest skew brace containing $x$. The following family of skew braces was considered for the first time in \cite{smock}.

\begin{defin}
    Let $B$ be a skew brace. Then, $B$ is said to be a \emph{one-generator} skew brace if there exists $x\in B$ such that $B=B(x)$.
\end{defin}

\section{One-generacy and irreducibility of q-cycle sets}\label{studsol}

In this section, we introduce the concept of one-generacy of a q-cycle set. Moreover, we will consider the special class of irreducible q-cycle sets, which is consistent with the one given in \cite{dietzel2025endocabling} for cycle sets. 

\bigskip

At first, note that if $X$ is a q-cycle set and $\{Y_I\}_I$ is a family of sub-q-cycle sets of $X$, then $\bigcap_I Y_I$ is a sub-q-cycle set of $X$. Now, let $S$ be a subset of $X$ and $\mathcal{Y}_S$ be the family of all the sub-q-cycle sets of $X$ containing $S$. Then, we can define the q-cycle set \emph{generated} by $S$, and we indicate it by $<S>$, as the sub-q-cycle set of $X$ given by
$$<S>:=\bigcap_{Y\in \mathcal{Y}_S} Y .$$
Of course, $<S>$ is the smallest sub-q-cycle set of $X$ containing $S$. If $S$ is a singleton $\{x\}$, we indicate $<S>$ simply by $<x>$.

\begin{defin}
    Let $X$ be a q-cycle set. Then, $X$ is said to be \emph{one-generator} if there exists $x\in X$ such that $X=<x>$. 
\end{defin}

Below we introduce a special class of q-cycle sets that includes the ones recently considered in \cite{dietzel2025endocabling}.

\begin{defin}
    A q-cycle set $X$ is said to be \emph{irreducible} if $\emptyset$ and $X$ are the only sub-q-cycle sets of $X$.
\end{defin}

The following result provides a concrete inductive description (in the finite case) of a sub-q-cycle set generated by a set. 

\begin{prop}\label{descr}
    Let $X$ be a finite q-cycle set and $S\subseteq  X$. Define inductively 
    $$\mathcal{C}_{0}:=S\quad and \quad \mathcal{C}_{n}:=\mathcal{C}_{n-1}\cup\{\sigma_a(b)|a,b\in \mathcal{C}_{n-1} \}\cup \{\delta_a(b)|a,b\in \mathcal{C}_{n-1} \}$$
    for all $n\in \mathbb{N}$. Then, $<S>=\bigcup_{n\in \mathbb{N}} \mathcal{C}_n $.
\end{prop}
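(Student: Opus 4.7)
The plan is the usual two-sided containment argument for a generated substructure, with one extra verification needed because the definition of q-cycle set requires $\sigma_x$ to be a bijection of the underlying set.

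For $\bigcup_{n}\mathcal{C}_{n}\subseteq\seq{S}$, I would argue by induction on $n$ that $\mathcal{C}_{n}\subseteq\seq{S}$. The base case $\mathcal{C}_{0}=S\subseteq\seq{S}$ is immediate. For the inductive step, any element of $\mathcal{C}_{n}$ either lies in $\mathcal{C}_{n-1}\subseteq\seq{S}$ or has the form $\sigma_{a}(b)=a\cdot b$ or $\delta_{a}(b)=a:b$ for some $a,b\in\mathcal{C}_{n-1}\subseteq\seq{S}$; since $\seq{S}$ is a sub-q-cycle set, it is closed under both operations, so these elements also lie in $\seq{S}$.

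For the reverse inclusion $\seq{S}\subseteq\bigcup_{n}\mathcal{C}_{n}$, I would show that $Y:=\bigcup_{n\in\mathbb{N}}\mathcal{C}_{n}$ is itself a sub-q-cycle set containing $S$, and then invoke minimality of $\seq{S}$. Clearly $S=\mathcal{C}_{0}\subseteq Y$. For closure, if $a,b\in Y$, pick $n_1,n_2$ with $a\in\mathcal{C}_{n_1}$, $b\in\mathcal{C}_{n_2}$, and set $m:=\max(n_1,n_2)$; then $a\cdot b=\sigma_{a}(b)$ and $a:b=\delta_{a}(b)$ both lie in $\mathcal{C}_{m+1}\subseteq Y$. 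The identities \eqref{ug1}, \eqref{ug2}, \eqref{ug3} hold in $Y$ because they hold in $X$ and $Y\subseteq X$.

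The one step that needs care — and the only place where finiteness is used — is verifying that $\sigma_{a}|_{Y}$ is a bijection of $Y$ for every $a\in Y$ (and likewise for $\delta_{a}$ when relevant). The chain $\mathcal{C}_{0}\subseteq\mathcal{C}_{1}\subseteq\cdots$ consists of subsets of the finite set $X$, so it stabilizes at some $\mathcal{C}_{N}$, whence $Y=\mathcal{C}_{N}$ is finite and closed under each $\sigma_{a}$ with $a\in Y$. Since $\sigma_{a}$ is injective on $X$ (it is a bijection of $X$), its restriction to the finite set $Y$ is an injective self-map, hence bijective. Thus $Y$ is a sub-q-cycle set of $X$ containing $S$, so $\seq{S}\subseteq Y$, completing the proof. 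I do not expect any genuine obstacle here; the only subtlety worth flagging is that the argument genuinely uses finiteness of $X$ to upgrade closure-under-$\sigma_{a}$ to bijectivity.
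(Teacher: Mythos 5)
Your proof is correct and follows essentially the same route as the paper: one inclusion by induction on $n$ using closure of $\seq{S}$ under the two operations, and the other by verifying that $\bigcup_{n}\mathcal{C}_{n}$ is a sub-q-cycle set containing $S$, with finiteness of $X$ invoked exactly where the paper invokes it, namely to guarantee that the restricted maps $\sigma_a$ (and $\delta_a$) are bijective on the union. Your write-up is in fact slightly more explicit than the paper's on that last point.
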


\begin{proof}
    If $a,b\in \bigcup_{n\in \mathbb{N}} \mathcal{C}_n $, then $a\cdotp b, a:b\in \bigcup_{n\in \mathbb{N}} \mathcal{C}_n $ and by finiteness of $X$ we have also $\sigma_a^{-1}(b), \delta_a^{-1}(b)\in \bigcup_{n\in \mathbb{N}} \mathcal{C}_n $. Obviously, \cref{ug1}, \cref{ug2} and \cref{ug3} hold for all $a,b,c\in \bigcup_{n\in \mathbb{N}} \mathcal{C}_n$. Therefore, we showed that $\bigcup_{n\in \mathbb{N}} \mathcal{C}_n$ is a q-cycle set. Clearly, $S$ is contained in $\bigcup_{n\in \mathbb{N}} \mathcal{C}_n$. By induction on $n$, one can show that $\mathcal{C}_n\subseteq <S>$ for all $n\in \mathbb{N}$, hence $\bigcup_{n\in \mathbb{N}} \mathcal{C}_n\subseteq <S>$ and the statement follows.
\end{proof}

\noindent The q-cycle set generated by a singleton is always indecomposable, as we can see in the following proposition.

\begin{prop}\label{descr2}
    Let $X$ be a finite q-cycle set and $x\in  X$. Let $C_x:=\bigcup_n \mathcal{C}_n$ be as in \cref{descr} with $S:=\{x\}$. Then, $C_x$ is an indecomposable q-cycle set.
\end{prop}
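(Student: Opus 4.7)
The plan is to invoke the characterization of indecomposability cited earlier in the paper: a finite q-cycle set is indecomposable if and only if the group $G := \langle \sigma_a, \delta_a \mid a \in C_x\rangle$ acts transitively on it. Note that by \cref{descr} (with $S=\{x\}$), the set $C_x$ is already a sub-q-cycle set of $X$, so the generators $\sigma_a,\delta_a$ for $a\in C_x$ do restrict to permutations of $C_x$, and $G$ is a well-defined subgroup of $\Sym_{C_x}$.

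The core of the argument is an induction on $n$ showing that $\mathcal{C}_n\subseteq G\cdot x$, where $G\cdot x$ denotes the $G$-orbit of $x$. The base case $\mathcal{C}_0=\{x\}\subseteq G\cdot x$ is immediate since $\id\in G$. For the inductive step, assume $\mathcal{C}_{n-1}\subseteq G\cdot x$ and pick an arbitrary element of $\mathcal{C}_n$. If it lies in $\mathcal{C}_{n-1}$ we are done. Otherwise it has the form $\sigma_a(b)$ or $\delta_a(b)$ for some $a,b\in \mathcal{C}_{n-1}$. Since $a\in \mathcal{C}_{n-1}\subseteq C_x$, the permutations $\sigma_a$ and $\delta_a$ are among the generators of $G$. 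Writing $b=g(x)$ for some $g\in G$ (possible by the inductive hypothesis), we obtain $\sigma_a(b)=(\sigma_a g)(x)\in G\cdot x$ and analogously $\delta_a(b)=(\delta_a g)(x)\in G\cdot x$, as required.

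Passing to the union gives $C_x=\bigcup_n\mathcal{C}_n\subseteq G\cdot x$, and the reverse inclusion is trivial because $G$ acts on $C_x$. Hence $G$ acts transitively on $C_x$, and the cited characterization yields indecomposability. I do not anticipate a serious obstacle; the only point that requires a moment of care is the observation that the generators $\sigma_a$ and $\delta_a$ actually act on $C_x$ (and in particular are invertible there), which relies on \cref{descr} and the finiteness hypothesis used therein.
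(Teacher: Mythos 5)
Your argument is correct and follows essentially the same route as the paper: both prove by induction on $n$ that every element of $\mathcal{C}_n$ lies in the orbit of $x$ under the group generated by $\{\sigma_a,\delta_a \mid a\in C_x\}$, and then invoke the transitivity characterization of indecomposability. Your version is slightly more explicit about why the generators restrict to permutations of $C_x$, but the substance is identical.
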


\begin{proof}
    Clearly, \cref{ug1}, \cref{ug2} and \cref{ug3} hold for all $a,b,c\in C_x$. Since $C_x=\bigcup_n \mathcal{C}_n$, we show by induction on $n$ that every element of $\mathcal{C}_n$ is contained in the orbit $o(x)$ of $x$ with respect to the action of the group generated by the maps $\sigma_a$ and $\delta_a$. If $n=0$, the statement is trivial. Now, if $a,b\in \mathcal{C}_{n-1}$, $b$ is in the same orbit of $x$ by inductive hypothesis and clearly $\sigma_a(b)$ and $\delta_a(b)$ are also in the same orbit of $x$, therefore $\mathcal{C}_n\subseteq o(x)$, and hence the statement follows.
\end{proof}

\begin{exs}
   \begin{itemize}
       \item[1)] Let $X$ be the set $\mathbb{Z}/p\mathbb{Z}$. Define on $X$ the binary operations $\cdotp$ and $:$ by $x\cdotp y:=y+1$ and $x:y:=y+2$ for all $x,y\in X$. Then, $X$ is irreducible: indeed, if $Y$ is a nonempty sub-q-cycle set, $y\in Y$ and $t\in \{1,...,p-1\}$, we have $y+t:=\sigma_y^{t}(y)\in Y$, therefore $Y=X$.
       \item[2)] Every decomposable q-cycle set is reducible: indeed, a decomposition of $X$ induces two proper nonempty sub-q-cycle sets.
       \item[3)] There are indecomposable q-cycle sets that are not one-generator. Such examples will be provided in \cref{ultsec}.
   \end{itemize} 
\end{exs}

The orbits with respect to the squaring maps allow to provide several generators of a one-generator q-cycle set.

\begin{prop}
    Let $X$ be a finite one-generator q-cycle set and suppose that $x\in X$ satisfies $X=<x>$. If $k$ is a natural number, then $\mathfrak{q}^k(x)$ and $\mathfrak{q'}^k(x)$ are also generators of $X$. In particular, if $|X|>1$ and $\mathcal{G}$ is the set of generators of $X$, then $\mathcal{G}>1$. 
\end{prop}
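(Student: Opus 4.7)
The plan is to establish the generator property for $\mathfrak{q}(x)$ first, then iterate; the argument for $\mathfrak{q}'^{k}(x)$ will be symmetric. Set $y:=\mathfrak{q}(x)=\sigma_{x}(x)$ and consider the sub-q-cycle set $\langle y\rangle$ of $X$. My goal is to show $x\in\langle y\rangle$, which would force $X=\langle x\rangle\subseteq\langle y\rangle\subseteq X$. Since $\langle y\rangle$ is closed under $\cdot$, it contains $y\cdot y=\mathfrak{q}(y)$, and then $\mathfrak{q}(y)\cdot\mathfrak{q}(y)=\mathfrak{q}^{2}(y)$, and so on; a trivial induction gives $\mathfrak{q}^{k}(y)=\mathfrak{q}^{k+1}(x)\in\langle y\rangle$ for every $k\geq 0$. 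The non-degeneracy assumption ensures that $\mathfrak{q}\colon X\to X$ is a bijection, so on the finite set $X$ it has finite order $N\geq 1$; hence $\mathfrak{q}^{N-1}(y)=\mathfrak{q}^{N}(x)=x$ lies in $\langle y\rangle$, completing the case $k=1$.

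For arbitrary $k$ I would proceed by induction on $k$: assuming $\mathfrak{q}^{k}(x)$ generates $X$, apply the argument above with $\mathfrak{q}^{k}(x)$ in the role of $x$ to conclude that $\mathfrak{q}(\mathfrak{q}^{k}(x))=\mathfrak{q}^{k+1}(x)$ is also a generator. The assertion for $\mathfrak{q}'^{k}(x)$ is obtained by the identical reasoning, replacing $\cdot$ by $:$ and $\sigma$ by $\delta$ throughout, and using that $\mathfrak{q}'$ is a bijection of $X$ by non-degeneracy.

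For the last part I would argue by contradiction. Suppose $|X|>1$ but $\mathcal{G}=\{x\}$. By what has just been proved, $\mathfrak{q}(x)$ and $\mathfrak{q}'(x)$ both belong to $\mathcal{G}$, hence both equal $x$. In the inductive description of \cref{descr} this gives $\mathcal{C}_{1}=\{x,\mathfrak{q}(x),\mathfrak{q}'(x)\}=\{x\}$, so $\mathcal{C}_{n}=\{x\}$ for every $n$, and therefore $X=\langle x\rangle=\{x\}$, contradicting $|X|>1$.

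The only genuinely delicate point is the finite-order step on $\mathfrak{q}$: the entire first-stage argument hinges on $\mathfrak{q}$ being a bijection of the finite set $X$, so that iterating $\mathfrak{q}$ starting at $y=\mathfrak{q}(x)$ necessarily cycles back to $x$ after finitely many steps. This is the one place where non-degeneracy is essential, and I would make the use of this hypothesis explicit in the write-up.
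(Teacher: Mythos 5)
Your proof is correct and follows essentially the same route as the paper's: both rest on the observation that $\langle \mathfrak{q}^k(x)\rangle$ is closed under the squaring maps, so the finite order of the bijections $\mathfrak{q},\mathfrak{q}'$ forces $x\in\langle \mathfrak{q}^k(x)\rangle$, and both derive the final claim from $\mathfrak{q}(x)=\mathfrak{q}'(x)=x$ via \cref{descr}. The only cosmetic difference is that the paper treats general $k$ in one step by choosing $m$ with $\mathfrak{q}^{km}=\mathfrak{q}'^{km}=\id_X$, whereas you do $k=1$ and induct; your explicit remark that non-degeneracy is what makes $\mathfrak{q}$ a bijection is a point the paper leaves implicit.
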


\begin{proof}
At first, note that if $z\in X$ and $ y\in <z>$, then $\mathfrak{q}^m(y)$ and $\mathfrak{q'}^m(y)$ are elements of $<z>$, for all $m\in \mathbb{N}$.   Now, if $m$ is such that $\mathfrak{q}^{km}=\mathfrak{q}'^{km}=id_X$, then $x=\mathfrak{q}^{km}(x)$ and $x=\mathfrak{q'}^{km}(x)$, therefore $x\in <\mathfrak{q}^k(x)>$ and $x\in <\mathfrak{q}'^k(x)>$, and hence the first part of the statement follows. If $\mathcal{G}=1$, then by the first part we must have $x\cdotp x=x$ and $x:x=x$, hence by \cref{descr} we have  $X=\{x\}$.
\end{proof}
    
\noindent The following proposition is an easy criterion to state when a q-cycle set is irreducible.

\begin{prop}\label{critirr}
    Let $X$ be a finite q-cycle set. Then, $X$ is irreducible if and only if $<x>=X$ for all $x\in X$.
\end{prop}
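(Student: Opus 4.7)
The plan is to prove both directions directly from the definitions of irreducibility and of the generated sub-q-cycle set, exploiting the minimality of $\langle x \rangle$ among sub-q-cycle sets containing $x$.

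For the forward direction, I would assume $X$ is irreducible and pick an arbitrary $x \in X$. Since $\langle x \rangle$ is the intersection of all sub-q-cycle sets containing $x$, it is itself a sub-q-cycle set of $X$, and it is non-empty because $x \in \langle x \rangle$. By the definition of irreducibility, the only non-empty sub-q-cycle set of $X$ is $X$ itself, so $\langle x \rangle = X$. Since $x$ was arbitrary, this gives $\langle x \rangle = X$ for all $x \in X$.

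For the converse, I would assume $\langle x \rangle = X$ for every $x \in X$ and let $Y$ be any non-empty sub-q-cycle set of $X$. Choosing any $y \in Y$, the sub-q-cycle set $Y$ belongs to the family $\mathcal{Y}_{\{y\}}$ used in the definition of $\langle y \rangle$, so $\langle y \rangle \subseteq Y$ by minimality. But $\langle y \rangle = X$ by hypothesis, hence $Y = X$. Together with the trivial observation that $\emptyset$ is also a sub-q-cycle set, this shows that $X$ is irreducible.

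There is no real obstacle here: the proof is a direct application of the definitions introduced just before the statement, and finiteness of $X$ is not even needed (it was required only for the explicit inductive description in \cref{descr}, which we do not invoke). The only point worth highlighting, to keep the exposition self-contained, is the observation that $\langle y\rangle \subseteq Y$ whenever $y \in Y$ and $Y$ is a sub-q-cycle set, which is immediate from the characterization of $\langle y\rangle$ as the intersection of all sub-q-cycle sets containing $y$.
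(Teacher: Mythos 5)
Your proof is correct and follows essentially the same argument as the paper: the forward direction uses that $\langle x\rangle$ is a non-empty sub-q-cycle set, and the converse uses the minimality of $\langle y\rangle$ inside any non-empty sub-q-cycle set $Y$. Your added remark that finiteness is not actually needed here is a fair observation, but does not change the substance of the argument.
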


\begin{proof}
    If $X$ is irreducible, then $<x>$ must be equal to $X$, for all $x\in X$. Conversely, if $<x>=X$ for all $x\in X$ and $Y$ is a non-empty sub-q-cycle set of $X$, then for every $y\in Y$ we obtain that $X=<y>\subseteq Y$, hence $X=Y$.
\end{proof}
    
\noindent The property of being one-generator is preserved by epimorphic images.

\begin{prop}\label{riduc2}
    Let $X,Y$ be finite q-cycle sets, $x\in X$, and $p:X\rightarrow Y$ an epimorphism. If $X$ is generated by $x$, then $Y$ is generated by $p(x)$. 
\end{prop}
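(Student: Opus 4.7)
The plan is to use the explicit inductive description of $<x>$ provided by \cref{descr}, together with the fact that the homomorphism $p$ intertwines the operations $\sigma_a$ and $\delta_a$ with their counterparts in $Y$. Concretely, let $\mathcal{C}^X_n$ be the chain of subsets of $X$ constructed from $S=\{x\}$ as in \cref{descr}, and let $\mathcal{C}^Y_n$ be the analogous chain in $Y$ constructed from $\{p(x)\}$. Since $X=<x>=\bigcup_n \mathcal{C}^X_n$ and $<p(x)>=\bigcup_n \mathcal{C}^Y_n$ by \cref{descr}, it suffices to show that $p(\mathcal{C}^X_n)\subseteq \mathcal{C}^Y_n$ for every $n\in\mathbb{N}$, because then $Y=p(X)\subseteq <p(x)>\subseteq Y$.

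The inclusion $p(\mathcal{C}^X_n)\subseteq \mathcal{C}^Y_n$ follows by a straightforward induction on $n$. The base case $n=0$ is immediate from $p(x)\in \mathcal{C}^Y_0$. For the inductive step, suppose $p(\mathcal{C}^X_{n-1})\subseteq \mathcal{C}^Y_{n-1}$, and let $c\in \mathcal{C}^X_n$. If $c\in \mathcal{C}^X_{n-1}$, the hypothesis gives $p(c)\in \mathcal{C}^Y_{n-1}\subseteq \mathcal{C}^Y_n$. Otherwise, $c=\sigma_a(b)=a\cdotp b$ or $c=\delta_a(b)=a:b$ for some $a,b\in \mathcal{C}^X_{n-1}$, and since $p$ is a q-cycle set homomorphism we have $p(a\cdotp b)=p(a)\cdotp p(b)=\sigma_{p(a)}(p(b))$ and $p(a:b)=p(a):p(b)=\delta_{p(a)}(p(b))$; as $p(a),p(b)\in \mathcal{C}^Y_{n-1}$ by the inductive hypothesis, both expressions lie in $\mathcal{C}^Y_n$.

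There is no real obstacle here: the statement is essentially a naturality check for the generation construction, and the only thing one has to use beyond \cref{descr} is that q-cycle set homomorphisms preserve the operations $\cdotp$ and $:$. The surjectivity of $p$ enters exactly once, in the final step to convert $p(X)\subseteq <p(x)>$ into the equality $Y=<p(x)>$.
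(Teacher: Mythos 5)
Your proof is correct and is precisely the ``standard calculation'' the paper alludes to: an induction showing $p(\mathcal{C}^X_n)\subseteq\mathcal{C}^Y_n$ via the inductive description of \cref{descr}, with surjectivity of $p$ used only to pass from $p(X)\subseteq\langle p(x)\rangle$ to $Y=\langle p(x)\rangle$. No gaps; the argument matches the intended approach, just written out in full.
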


\begin{proof}
The statement follows by a standard calculation.
\end{proof}

\noindent Now, we provide two results involving irreducible q-cycle sets that extend two recent results provided in \cite[Propositions 3.2 and 3.3]{dietzel2025endocabling} in the context of cycle sets.

\begin{cor}\label{riduc}
    Let $X,Y$ be q-cycle sets and $p:X\rightarrow Y$ an epimorphism. If $X$ is irreducible, then $Y$ is also irreducible.
\end{cor}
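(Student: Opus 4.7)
The plan is to deduce this directly from the two preceding propositions, using the characterization of irreducibility in terms of cyclic generators. I expect no real obstacle; the argument is essentially a two-line composition, so most of the care goes into checking surjectivity is used correctly and into observing that the statement is implicitly in the finite setting (since \cref{critirr} and \cref{riduc2} are).

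The structure I have in mind is as follows. Fix an arbitrary element $y \in Y$. Since $p$ is an epimorphism, there exists $x \in X$ with $p(x) = y$. By \cref{critirr}, irreducibility of $X$ gives $<x> = X$, so $X$ is one-generator, generated by $x$. Now I invoke \cref{riduc2}, which transports one-generacy along epimorphisms: $Y$ is generated by $p(x) = y$, that is, $<y> = Y$. Since $y \in Y$ was arbitrary, every singleton in $Y$ generates $Y$, and another application of \cref{critirr} (in the opposite direction) yields that $Y$ is irreducible.

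If one prefers a self-contained argument that does not pass through \cref{critirr}, the alternative route is to argue by contrapositive: given a proper nonempty sub-q-cycle set $Z \subsetneq Y$, form its preimage $p^{-1}(Z) \subseteq X$. A short check shows $p^{-1}(Z)$ is closed under $\cdot$ and $:$ (since $p$ is a homomorphism), it is nonempty by surjectivity of $p$ onto $Z$, and it is proper because $p(p^{-1}(Z)) = Z \ne Y$. In the finite setting this forces $p^{-1}(Z)$ to be a proper nonempty sub-q-cycle set of $X$, contradicting irreducibility of $X$. Either route works; the first is shorter and stays in the spirit of the propositions already established, so that is the one I would present.
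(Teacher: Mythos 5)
Your argument is correct, but note that the paper does not actually spell out a proof here: it only says the argument is ``similar to the proof of \cite[Proposition 3.2]{dietzel2025endocabling}'', which is the preimage argument you sketch as your second route (pull back a proper nonempty sub-q-cycle set of $Y$ along $p$ and contradict irreducibility of $X$). Your primary route --- fix $y\in Y$, lift it to $x\in X$ by surjectivity, use \cref{critirr} to get $\langle x\rangle =X$, push forward with \cref{riduc2} to get $\langle y\rangle =Y$, and conclude by \cref{critirr} again --- is a genuinely internal derivation from the two propositions the paper has just established, and is arguably the more natural reading of the word ``Corollary'' given where the statement sits in the text. The trade-off is exactly the one you identify: \cref{critirr} and \cref{riduc2} are stated only for finite q-cycle sets, whereas the corollary (like the cited external proposition) carries no finiteness hypothesis, so your first route silently restricts to the finite case while the preimage route survives without that restriction (closure of $p^{-1}(Z)$ under $\cdot$ and $:$, together with $p\circ\sigma_a^{-1}=\sigma_{p(a)}^{-1}\circ p$ and the bijectivity of $\sigma_{p(a)}$ on $Z$, gives bijectivity of $\sigma_a$ on $p^{-1}(Z)$ directly, with no counting argument). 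If you present only one proof, present the second one, or state the finiteness hypothesis explicitly in the first.
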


\begin{proof}
It is similar to the proof of \cite[Proposition 3.2]{dietzel2025endocabling}.
\end{proof}

\begin{prop}
    Let $X$ be a finite q-cycle set and suppose that the subgroup of $Sym(X)$ generated by $\{\mathfrak{q},\mathfrak{q}'\}$ acts transitively on $X$. Then, $X$ is irreducible.
\end{prop}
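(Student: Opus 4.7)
My plan is to invoke \cref{critirr}, which reduces the task to showing that $\langle x\rangle = X$ for every $x\in X$. So I fix an arbitrary $x\in X$ and set $C_x:=\langle x\rangle$. By \cref{descr}, $C_x$ admits the explicit description $\bigcup_{n\in\mathbb{N}}\mathcal{C}_n$, starting from $\mathcal{C}_0=\{x\}$ and applying the recursive rule given there.

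The heart of the argument is the observation that $C_x$ is stable under both squaring maps. Indeed, if $y\in\mathcal{C}_n$, then choosing $a=b=y$ in the recursive definition of $\mathcal{C}_{n+1}$ immediately produces $\sigma_y(y)=\mathfrak{q}(y)$ and $\delta_y(y)=\mathfrak{q}'(y)$; both therefore lie in $\mathcal{C}_{n+1}\subseteq C_x$. Consequently $\mathfrak{q}(C_x)\subseteq C_x$ and $\mathfrak{q}'(C_x)\subseteq C_x$.

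Since $X$ is finite and $\mathfrak{q},\mathfrak{q}'\in\Sym(X)$ by hypothesis, forward-stability upgrades automatically to stability under $\mathfrak{q}^{-1}$ and $\mathfrak{q}'^{-1}$, because each inverse coincides with a sufficiently large positive power. Thus $C_x$ is invariant under the whole subgroup $\langle \mathfrak{q},\mathfrak{q}'\rangle$, and in particular it contains the orbit of $x$ under this action, which by the transitivity assumption equals $X$. This forces $C_x=X$, and the statement follows from \cref{critirr}.

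The only step needing attention is verifying that $C_x$ is closed under $\mathfrak{q}$ and $\mathfrak{q}'$, since the recursive rule in \cref{descr} mixes two arbitrary elements $a,b\in\mathcal{C}_{n-1}$ rather than a single one; happily, the specialisation $a=b=y$ makes the squaring maps appear directly, so the argument collapses into one short induction.
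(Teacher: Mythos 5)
Your argument is correct and rests on the same key observation as the paper's one-line proof: a nonempty sub-q-cycle set is closed under the squaring maps, so by finiteness it is $\langle\mathfrak{q},\mathfrak{q}'\rangle$-invariant and transitivity forces it to be all of $X$. The only cosmetic difference is that you route the argument through \cref{critirr} and the description in \cref{descr}, whereas the paper applies the closure observation directly to an arbitrary nonempty sub-q-cycle set $Y$; both are equally valid.
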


\begin{proof}
    If $Y\subseteq X$ and $y\in Y$, then $\mathfrak{q}(y)$ and  $\mathfrak{q}'(y)$ belong to $Y$, and this fact, together with our hypothesis, implies $X=Y$.
\end{proof}

\noindent The following results provide relations between the generators of $X$ and the ones of its retraction $\Ret(X)$.

\begin{theor}\label{linkret2}
    Let $X$ be a finite q-cycle set. If $X$ is one-generator, then so is $\Ret(X)$.\\
    Conversely, if $X$ is indecomposable and $\Ret(X)$ is generated by  an element $i\in \Ret(X)$, then $X$ is generated by any element $x\in X$  such that $\Ret(x)=i$.
\end{theor}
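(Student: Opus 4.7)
The plan is to handle the two directions separately. For the forward direction, I would simply exploit that the canonical projection $\pi: X \to \Ret(X)$, $y \mapsto \Ret(y)$ is a q-cycle set epimorphism (since $X$ is finite and non-degenerate). If $X = <x>$, then \cref{riduc2} applied to $\pi$ immediately yields $\Ret(X) = <\Ret(x)>$.

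For the converse, assume $X$ is indecomposable, $\Ret(X) = <i>$, and fix $x \in X$ with $\Ret(x) = i$. I would invoke \cref{cov} applied to $\pi$: since $X$ is indecomposable, there exist a set $S$ and a dynamical pair $(\alpha, \alpha')$ such that $X \cong \Ret(X) \times_{\alpha, \alpha'} S$, with $\pi$ becoming the projection onto the first coordinate. The decisive observation is that the fibers of $\pi$ are exactly the $\sim_{\Ret}$-classes, so for every $j \in \Ret(X)$ and every $t, t' \in S$ one has $\sigma_{(j,t)} = \sigma_{(j,t')}$ and $\delta_{(j,t)} = \delta_{(j,t')}$; denote these common maps on $X$ by $\Sigma_j$ and $\Delta_j$ respectively.

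Applying \cref{riduc2} to the restriction of $\pi$ to $<x>$, the image $\pi(<x>)$ is a sub-q-cycle set of $\Ret(X)$ containing $i$, hence equal to $<i> = \Ret(X)$. Thus $<x>$ meets every fiber, so for each $j \in \Ret(X)$ we can choose $(j, t_j) \in <x>$. Then $\Sigma_j = \sigma_{(j, t_j)}$ preserves $<x>$, and by finiteness so does $\Sigma_j^{-1}$; the analogous statement holds for $\Delta_j^{\pm 1}$. The permutation group $G$ generated by $\{\sigma_a, \delta_a : a \in X\}$ coincides with the one generated by $\{\Sigma_j, \Delta_j : j \in \Ret(X)\}$, so $G$ preserves $<x>$. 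Since $X$ is indecomposable, $G$ acts transitively on $X$, giving $X = G \cdot x \subseteq <x>$.

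The main subtle point is precisely the passage from the abstract generators of $G$ to concrete elements of $<x>$: a priori one would need every single element of $X$ to lie in $<x>$ to realize all $\sigma_a, \delta_a$ as acting on $<x>$, whereas the constant-on-fibers property of $\sim_{\Ret}$ only requires hitting each fiber once. The dynamical-extension description furnished by \cref{cov} is the tool that makes this reduction available; everything else is routine bookkeeping.
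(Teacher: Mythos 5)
Your proposal is correct and follows essentially the same route as the paper: the forward direction via \cref{riduc2}, and the converse via \cref{cov} together with the fact that $\sigma$ and $\delta$ are constant on $\sim_{\Ret}$-fibers and that $<x>$ surjects onto $\Ret(X)$. The only (cosmetic) difference is that the paper substitutes fiber-representatives from $<x>$ into an explicit word carrying $(i,s)$ to $(j,t)$, whereas you show the whole permutation group $G$ stabilizes $<x>$ and then invoke transitivity.
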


\begin{proof}
    The first part follows by \cref{riduc2}. Now, suppose that $X$ is indecomposable and that $\Ret(X)$ is one-generator. By \cref{cov} applied with the epimorphism $\Ret$, without loss of generality we can suppose that $X=I\times_{\alpha,\alpha'} S$, where $S$ is a set and $I$ is a q-cycle set isomorphic to $\Ret(X)$, and that $\Ret$ is the projection of $I\times S$ on the first component. Now, suppose that $I=<i>$ for some element $i$ of $I$ and let $(i,s),(j,t)\in X$. Since $X$ is indecomposable and finite, there exist $k\in \mathbb{N}$, $(x_1,s_1),...,(x_k,s_k)\in X$ and $\gamma^{1},...,\gamma^{k}\in \{\sigma,\delta \}$ such that $\gamma^{1}_{(x_1,s_1)}...\gamma^{k}_{(x_k,s_k)}(i,s)=(j,t).$ Now, since $I$ is generated by $i$, we have that each $x_j\in <i>$ for every $j\in \{1,...,k\}$. Moreover, it follows that for every $j\in \{1,...,k\}$ there exist $v_j\in S$ such that $(x_j,v_j)\in <(i,s)>$. Since $I\cong \Ret(X)$, we have that $\gamma^{j}_{(x_j,s_j)}=\gamma^{j}_{(x_j,v_j)}$ for all $j\in \{1,...,k\}$, hence $(j,t)=\gamma^{1}_{(x_1,v_1)}...\gamma^{k}_{(x_k,v_k)}(i,s)\in <(i,s)>$. By arbitrariness of $s\in S$ and $(j,t)$, we have that $X$ is generated by any element as in the statement.
\end{proof}

\begin{cor}\label{linkret}
    Let $X$ be a finite q-cycle set. If $X$ is irreducible, then so is $\Ret(X)$.\\
    Conversely, if $X$ is indecomposable and $\Ret(X)$ is irreducible, then $X$ is irreducible.
\end{cor}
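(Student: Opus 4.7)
For the forward direction, the plan is simply to invoke \cref{riduc}: the canonical map $\pi:X \to \Ret(X)$ sending $x\mapsto \Ret(x)$ is by construction a q-cycle set epimorphism, so if $X$ is irreducible then its epimorphic image $\Ret(X)$ must be irreducible as well. No further work is needed here.

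For the converse, suppose $X$ is indecomposable and $\Ret(X)$ is irreducible. By \cref{critirr}, it suffices to show that $\langle x\rangle = X$ for every $x\in X$. Fix an arbitrary $x\in X$ and set $i:=\Ret(x)\in \Ret(X)$. Since $\Ret(X)$ is irreducible, \cref{critirr} applied to $\Ret(X)$ gives $\langle i\rangle = \Ret(X)$, i.e. $\Ret(X)$ is one-generator with generator $i$. Now we apply the second (converse) part of \cref{linkret2}: because $X$ is indecomposable and $\Ret(X)$ is generated by $i$, any element $y\in X$ with $\Ret(y)=i$ generates $X$. Since $x$ itself satisfies $\Ret(x)=i$, we obtain $\langle x\rangle = X$. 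As $x$ was arbitrary, $X$ is irreducible by \cref{critirr}.

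There is essentially no real obstacle here, as the substantive content has already been packaged into \cref{riduc} and, for the converse, into \cref{linkret2}. The one thing to be careful about is the logical structure of the converse: one must invoke \cref{critirr} twice (once to extract a generator for $\Ret(X)$, once to establish irreducibility of $X$) and use indecomposability of $X$ as a hypothesis for \cref{linkret2}, since without indecomposability the lifting of a generator of $\Ret(X)$ to a generator of $X$ may fail.
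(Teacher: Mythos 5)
Your proposal is correct and follows exactly the paper's route: the forward direction is \cref{riduc} applied to the canonical epimorphism onto $\Ret(X)$, and the converse combines \cref{critirr} with the second part of \cref{linkret2}, which is precisely what the paper cites. Your write-up just makes explicit the double use of \cref{critirr} that the paper leaves implicit.
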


\begin{proof}
    The first part follows by \cref{riduc}, while the second part follows by \cref{critirr} and \cref{linkret2}.
\end{proof}

\begin{cor}\label{absret}
    Let $X$ be an indecomposable and finite q-cycle set. Then, $X$ is irreducible if and only if the absolute retraction $X$ is irreducible.
\end{cor}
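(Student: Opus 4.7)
The plan is to prove both implications by iterating \cref{linkret} along the chain of successive retractions
\[
X \;\longrightarrow\; \Ret(X) \;\longrightarrow\; \Ret^2(X) \;\longrightarrow\; \cdots \;\longrightarrow\; \Ret^m(X),
\]
each arrow being the canonical retraction epimorphism. Since the absolute retraction is reached after finitely many steps, both directions should reduce to induction on $k$.

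For the forward direction, I would assume $X$ is irreducible and prove by induction on $k\ge 0$ that $\Ret^k(X)$ is irreducible. The base case $k=0$ is the hypothesis, and the inductive step is exactly the first part of \cref{linkret} applied to the epimorphism $\Ret^{k-1}(X)\to \Ret^k(X)$. Taking $k=m$ yields the conclusion.

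For the converse, assume $\Ret^m(X)$ is irreducible and argue by downward induction on $k$, showing $\Ret^k(X)$ is irreducible for $k=m, m-1,\ldots,0$. Here the second half of \cref{linkret} is the relevant tool, but it additionally requires $\Ret^{k-1}(X)$ to be indecomposable at each step. This is handled by the observation that indecomposability is preserved under q-cycle set epimorphisms: via the characterization of indecomposability as the transitivity of the group generated by $\{\sigma_x,\delta_x\mid x\in X\}$ (recalled just after \cref{cov}), a homomorphism carries this transitive action onto the corresponding action on the image. Hence each $\Ret^k(X)$ is indecomposable, so the inductive step goes through, and taking $k=0$ gives that $X$ is irreducible.

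The main (and only) subtlety is supplying the indecomposability hypothesis needed to apply the second half of \cref{linkret} at each downward step; once that propagation is verified, the result is a routine double induction along the retraction chain, without any further calculation.
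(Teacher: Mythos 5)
Your proof is correct and follows essentially the same route as the paper, which simply states that the corollary ``follows by \cref{linkret}, together with an inductive argument''; your write-up is that inductive argument made explicit. You also correctly identify and resolve the one point the paper leaves implicit, namely that indecomposability must be propagated down the retraction chain (via transitivity of the permutation group action passing to epimorphic images) so that the second half of \cref{linkret} applies at each step.
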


\begin{proof}
    It follows by \cref{linkret}, together with an inductive argument.
\end{proof}

By a standard exercise, one can show that a q-cycle set $X$ is irreducible if and only if the associated solution $(X,r)$ has no sub-solutions. For this reason, we find \cite[Theorem 5.1]{castelli2023studying} as a corollary of \cref{linkret}.

\begin{cor}[Theorem 5.1, \cite{castelli2023studying}]\label{coroll}
    Let $X$ be an indecomposable multipermutation q-cycle set. Then, $X$ is irreducible.
\end{cor}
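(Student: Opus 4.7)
The plan is to reduce the claim immediately to \cref{absret}. By the very definition of a multipermutation q-cycle set, the absolute retraction $\Ret^m(X)$ has size $1$. Any singleton q-cycle set is trivially irreducible, since its only sub-q-cycle sets are $\emptyset$ and the whole set. Hence the absolute retraction of $X$ is irreducible, and then \cref{absret}, applied to the indecomposable finite q-cycle set $X$, yields that $X$ itself is irreducible.

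The only non-trivial ingredient behind this one-line argument is \cref{absret}, which in turn rests on iterating the indecomposable case of \cref{linkret}. I do not anticipate any substantive obstacle: the indecomposability hypothesis is precisely what is needed so that the implication ``$\Ret(X)$ irreducible $\Rightarrow$ $X$ irreducible'' is available at every step of the tower $X \twoheadrightarrow \Ret(X) \twoheadrightarrow \cdots \twoheadrightarrow \Ret^m(X)$, and the multipermutation assumption ensures that this tower terminates at a singleton, which serves as the base case of the induction. Since the retraction epimorphisms preserve indecomposability, the inductive hypothesis of \cref{linkret} is maintained at each stage without additional work.
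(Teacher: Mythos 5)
Your proof is correct and is essentially identical to the paper's: both arguments observe that the absolute retraction of a multipermutation q-cycle set is a singleton, hence trivially irreducible, and then invoke \cref{absret}. The extra commentary on how \cref{absret} is obtained by iterating \cref{linkret} is accurate but not needed beyond what the paper already records.
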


\begin{proof}
    Since the absolute retraction of a multipermutation q-cycle set has size one, the result follows by \cref{absret}.
\end{proof}

\section{One-generator skew braces and q-cycle sets}\label{secques}

In this section, we study one-generator skew braces. First, we provide general results on the additive/multiplicative group generated by a sub-q-cycle set of the q-cycle set associated to a skew brace. In the main result, we will show that a finite one-generator skew brace $B$ can be characterized by certain sub-q-cycle sets. 

\bigskip

\noindent Below, we provide a concrete description of a finite skew brace generated by one element. Let $B$ be a finite skew brace and $x\in B$. Define $B_0:=\{0,x\}$ and 
$$B_{n}:=\{a\circ b|a,b\in B_{n-1}\}\cup \{\lambda_a(b)|a,b\in B_{n-1}\}$$
for all $n\in \mathbb{N}$. Then, $B(x)$ can be described as in the following result.

\begin{prop}\label{oneg}
    Let $B$ be a finite skew brace and $x\in B$. Then, $B(x)=\bigcup_{n} B_n$.
\end{prop}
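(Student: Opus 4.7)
The plan is to show the two inclusions $B(x) \supseteq \bigcup_n B_n$ and $B(x) \subseteq \bigcup_n B_n$ separately. The first is by a straightforward induction: $B_0 = \{0, x\}$ is in $B(x)$ because $0$ is the common identity of $(B,+)$ and $(B,\circ)$, and $x$ generates by hypothesis; then, since $B(x)$ is itself a sub-skew-brace and hence closed under $\circ$ and under $\lambda_a$ for $a\in B(x)$ (as $\lambda_a(b)=-a+a\circ b$), the inductive step $B_{n-1}\subseteq B(x)\Rightarrow B_n\subseteq B(x)$ is immediate.

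The nontrivial direction is showing that $Y:=\bigcup_n B_n$ is already a sub-skew-brace of $B$ (which then forces $B(x)\subseteq Y$). The key observation is that since $B$ is finite, closure under $\circ$ alone automatically yields closure under the $\circ$-inverse: for each $a\in Y$, the element $a^{-}$ is a positive $\circ$-power of $a$ and hence lies in some $B_m$. I would first verify that $B_{n-1}\subseteq B_n$ for every $n$, using $a=0\circ a$ together with the fact that $0$ lies in every $B_n$. Thus $Y$ is a $\circ$-subgroup of $(B,\circ)$ and is stable under the action of every $\lambda_a$ with $a\in Y$.

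To upgrade this to closure under $+$ and $-$, I would exploit the translation formulas between the two operations. From $\lambda_a(b)=-a+a\circ b$ and the fact (stated before Example 1.6) that $\lambda:(B,\circ)\to\Aut(B,+)$ is a homomorphism, so that $\lambda_{a^{-}}=\lambda_a^{-1}$, one derives
\begin{align*}
a+b &= a\circ \lambda_{a^{-}}(b), \\
-a &= \lambda_a(a^{-}),
\end{align*}
for all $a,b\in B$. The first identity shows that, given $a,b\in Y$, the sum $a+b$ can be produced from $a$, $a^{-}$, and $b$ by one application of $\lambda$ and one application of $\circ$, each of which stays inside $Y$; the second identity writes $-a$ as a single $\lambda$-application to an element of $Y$. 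Together with $0\in Y$, these establish that $(Y,+)$ is a subgroup of $(B,+)$. Since the skew-brace axiom is inherited from $B$ automatically, $Y$ is a sub-skew-brace containing $x$, completing the proof.

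The main technical point is the finiteness-plus-formulas argument: closure under $+$ is never explicitly forced in the definition of $B_n$, and one has to notice that it comes for free once finiteness gives $\circ$-inverses and the conversion identities $a+b=a\circ\lambda_{a^{-}}(b)$ and $-a=\lambda_a(a^{-})$ are invoked. Everything else is bookkeeping.
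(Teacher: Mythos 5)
Your proposal is correct and follows essentially the same route as the paper: show $\bigcup_n B_n$ is a sub-skew-brace containing $x$ (using finiteness to get $\circ$-inverses as positive powers, and then closure under $+$ and $-$), and conversely that each $B_n$ lies in $B(x)$ by induction. The paper's proof is terser — it simply asserts that finiteness yields $a^-,-a,a+b\in\bigcup_n B_n$ — whereas you make explicit the conversion identities $a+b=a\circ\lambda_{a^{-}}(b)$ and $-a=\lambda_a(a^{-})$ and the nestedness $B_{n-1}\subseteq B_n$ that this assertion silently relies on.
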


\begin{proof}
  First, note that $B_i\subseteq B_j$ for $i\leq j$. Now, if $a,b\in \bigcup_{n} B_n$ then $a^k\in \bigcup_{n} B_n$, for all $k\in \mathbb{N}$, and since $B$ is finite we obtain that $a^{-},-a, a+b\in \bigcup_{n} B_n$. Therefore, $\bigcup_{n} B_n $ is a skew brace containing $x$. Now, by induction on $n$ one can easily show that $\bigcup_{n} B_n $ must be contained in $B(x)$, therefore the statement follows.
\end{proof}

\noindent Before giving the following result, recall that if $B$ is a skew brace, then it has a natural q-cycle set structure given by $a\cdotp b:=\lambda_{a^-}(b)$ and $a:b:=a^{-}\circ b-a^{-}$ for all $a,b,\in B$. From now on, if $S$ is a subset of a skew brace $B$, we indicate by $<S>_{\circ}$ (resp. $<S>_{+}$) the multiplicative (resp. additive) subgroup of $(B,\circ)$ (resp. $(B,+)$) generated by $S$.

\begin{lemma}\label{ugu}
    Let $B$ be a finite skew brace and $X$ a sub-q-cycle set of $B$. Then, $<X>_{+}=<X>_{\circ}$.
\end{lemma}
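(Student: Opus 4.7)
My plan is to prove both inclusions $K \subseteq H$ and $H \subseteq K$, where $H := \langle X\rangle_+$ and $K := \langle X\rangle_\circ$, by first establishing two auxiliary facts about how $X$ interacts with the additive and multiplicative structures of $B$, and then running two parallel inductions on word length.

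The first auxiliary fact is the identity $X^- = -X$. Evaluating the q-cycle set operation on the diagonal gives $x \cdot x = \lambda_{x^-}(x) = -x^- + x^- \circ x = -x^-$, so closure of $X$ under $\cdot$ forces $-x^- \in X$, i.e.\ $x^- \in -X$; hence $X^- \subseteq -X$, and equality follows from $|X^-| = |X| = |-X|$ by finiteness. The second fact is the stability statement $\lambda_e(X) = X$ for every $e \in X \cup X^-$: sub-q-cycle set closure gives $\lambda_{x^-}(X) \subseteq X$, finiteness plus injectivity of $\lambda_{x^-}$ upgrade this to equality, and the case $e \in X$ follows since $\lambda_x = \lambda_{x^-}^{-1}$. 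Because $\lambda$ is a $\circ$-homomorphism into $\Aut(B,+)$, the set $\{d \in B : \lambda_d(X) = X\}$ is a $\circ$-subgroup of $B$ containing $X$, hence contains all of $K$. Combined with additivity of $\lambda_d$ and the first fact, this yields the subclaim: $\lambda_d(X \cup X^-) = X \cup X^-$ and $\lambda_d(H) = H$ for every $d \in K$.

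With these tools both inclusions follow by a straightforward induction on word length, with common base $X \cup X^- \subseteq H \cap K$ (indeed $X^- = -X \subseteq H$ additively, and $X^- \subseteq K$ multiplicatively). For $K \subseteq H$: given $c = c_1 \circ \cdots \circ c_n$ with $c_i \in X \cup X^-$, write $c = c_1 + \lambda_{c_1}(c_2 \circ \cdots \circ c_n)$; the first summand lies in $H$ by the base case, and the second lies in $\lambda_{c_1}(H) = H$ by the subclaim, with $c_2 \circ \cdots \circ c_n \in H$ available from the inductive hypothesis. For $H \subseteq K$: given $a = c_1 + \cdots + c_n$ with $c_i \in X \cup X^-$, let $u := c_1 + \cdots + c_{n-1} \in K$ by induction and write $a = u \circ \lambda_{u^-}(c_n)$; the subclaim applied to $u^- \in K$ guarantees $\lambda_{u^-}(c_n) \in X \cup X^- \subseteq K$, so $a \in K$.

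The main obstacle, or rather the only non-formal use of the q-cycle set axioms, is the identification $X^- = -X$: without converting additive inverses into $\circ$-inverses one cannot translate a $+$-word in $X \cup (-X)$ into a $\circ$-word (or vice versa), and the inductions fail to close. The computation $-x^- = x \cdot x$ is the crucial bridge between the two group structures, and once it is in place the rest of the argument is a routine propagation along words, carried out via the skew brace identities $c_1 \circ c_2 = c_1 + \lambda_{c_1}(c_2)$ and $c_1 + c_2 = c_1 \circ \lambda_{c_1^-}(c_2)$.
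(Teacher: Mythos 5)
Your proof is correct and follows essentially the same route as the paper: both inclusions are established by induction on word length, using the identities $a\circ b = a+\lambda_a(b)$ and $a+b = a\circ\lambda_{a^-}(b)$ together with the $\lambda$-stability of the sub-q-cycle set $X$. The only real difference is that you handle inverses explicitly via the identity $X^- = -X$ (extracted from $x\cdot x = -x^-$), whereas the paper sidesteps inverses entirely by observing that, by finiteness, every element of $\langle X\rangle_+$ (resp.\ $\langle X\rangle_\circ$) is already a positive word $x_1+\cdots+x_n$ (resp.\ $x_1\circ\cdots\circ x_n$) in elements of $X$.
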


\begin{proof}
    Since $B$ is finite, every element of $<X>_+$ can be written as $x_1+...+x_n$ for a suitable $n\in \mathbb{N}$ and $x_1,...,x_n\in X$. We show by induction on $n$ that $x_1+...+x_n\in <X>_{\circ}$. If $n=1$, the statement is trivial. Now, $x_1+...+x_n=x_1\circ (\lambda_{x_1^-}(x_2)+...+\lambda_{x_1^-}(x_n))$, and since $X$ is a sub-q-cycle set of $B$ we have $\lambda_{x_1}(x_i)\in X$ for all $i\in \{2,...,n\}$, therefore by inductive hypothesis it follows that there are $t\in \mathbb{N}$, $y_1,...,y_t\in X$ such that  $(\lambda_{x_1^-}(x_2)+...+\lambda_{x_1^-}(x_n))=y_1\circ...\circ y_t$. Hence $x_1+...+x_n=x_1\circ y_1\circ...\circ y_t $ and our claim follows.\\
    Again by finiteness of $B$, every element of $<X>_\circ$ can be written as $x_1\circ...\circ x_n$ for a suitable $n\in \mathbb{N}$ and $x_1,...,x_n\in X$. In the same spirit of the first part, one can easily show by induction on $n$ that $x_1\circ...\circ x_n\in <X>_+$, therefore the statement follows.
\end{proof}

\noindent The following result is implicitly contained in \cite{castelli2023studying}, but here we prefer to give a self-contained proof.

\begin{lemma}\label{transorb}
    Let $B$ be a finite one-generator skew brace and $x\in B$ such that $B=B(x)$. Then, the orbit $X$ of $x$ with respect to the action of the subgroup of $Sym(B)$ generated by the set $\{\sigma_a,\delta_a\mid a\in B\}$, is a transitive cycle base.
\end{lemma}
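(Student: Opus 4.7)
Since $X$ is by definition a single orbit, transitivity is automatic, so the content of the statement is to show that $X$ is a cycle base, i.e. that it additively (equivalently, by \cref{ugu}, multiplicatively) generates $B$. The first thing I would check is that $X$ is in fact a sub-q-cycle set of $B$: for $y,z\in X$, the elements $y\cdotp z=\sigma_y(z)$ and $y:z=\delta_y(z)$ lie in the orbit of $x$ under the generating set $\{\sigma_a,\delta_a\mid a\in B\}$ (just precompose the orbit-realising group element for $z$ with $\sigma_y$ or $\delta_y$), so $X$ is closed under $\cdotp$ and $:$. This legitimises the use of \cref{ugu}, so setting $Y:=<X>_+=<X>_{\circ}$, I get a subset of $B$ that is simultaneously an additive and a multiplicative subgroup.

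Next I would show that $Y$ is closed under $\lambda_a$ whenever $a\in Y$. This is where the brace axiom does the work: for $a,b\in Y$, one has $\lambda_a(b)=-a+a\circ b$, and the right-hand side lies in $Y$ because $Y$ is closed under both $\circ$ and the additive inverse. The plan is then to feed this into the explicit description of $B(x)$ given by \cref{oneg}: $B(x)=\bigcup_n B_n$, with $B_0=\{0,x\}$ and $B_n$ obtained from $B_{n-1}$ by multiplicative products and $\lambda$-actions. A straightforward induction on $n$ gives $B_n\subseteq Y$ for every $n$: the base case is $\{0,x\}\subseteq Y$ because $x\in X$ and $0$ belongs to any subgroup, and the inductive step combines the multiplicative closure of $Y$ with the $\lambda$-closure established above.

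Putting the pieces together, $B=B(x)\subseteq Y\subseteq B$, hence $Y=B$, which is exactly the statement that $X$ generates $B$. Since $X$ is a single orbit and generates $B$, it is a transitive cycle base.

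The only point that is not completely routine is the observation that $Y$ is closed under $\lambda_a$ for $a\in Y$; everything else is either direct from the definitions or an immediate induction. Once this closure is noted, \cref{oneg} essentially forces $B(x)\subseteq Y$, so I do not anticipate any real obstacle beyond being careful that the group $Y$ is taken inside $B$ and that the inductive description of $B(x)$ is indeed valid in the finite setting (as already ensured by \cref{oneg}).
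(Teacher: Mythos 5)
Your proof is correct, and while it shares the overall skeleton with the paper's argument (both invoke \cref{oneg} to reduce to showing $B_n\subseteq <X>_{\circ}$ by induction on $n$, and both lean on \cref{ugu}), the treatment of the crucial case $c=\lambda_a(b)$ is genuinely different. The paper decomposes $a=x_1\circ\cdots\circ x_m$ multiplicatively and $b=y_1+\cdots+y_n$ additively over $X$, uses $\lambda_a\in\Aut(B,+)$ to distribute $\lambda_a$ over the sum, and then needs the full orbit property of $X$ to see that each term $\lambda_{x_1\circ\cdots\circ x_m}(y_i)$ lands back in $X$. You instead observe that $Y:=<X>_{+}=<X>_{\circ}$ is closed under $\lambda_a$ for $a\in Y$ directly from the brace identity $\lambda_a(b)=-a+a\circ b$, with no decomposition and no appeal to the orbit action beyond what is needed to check that $X$ is a sub-q-cycle set (so that \cref{ugu} applies) -- a point you rightly verify and which the paper leaves implicit. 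Your route is slightly slicker and, because it only uses that $X$ is a sub-q-cycle set containing $x$, the identical argument also yields \cref{genonec} (take $C=<x>$ in place of the orbit), so the two lemmas could be unified under your approach; what the paper's more explicit computation buys is a concrete expression for elements of $B$ as $\circ$-products of orbit elements, which is reused in later arguments.
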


\begin{proof}
    By \cref{oneg}, it is sufficient to show that $B_n\subseteq <X>_{\circ}$ for all $n\in \mathbb{N}$. We show our claim by induction on $n$. Clearly, $B_0\subseteq <X>_{\circ}$. Now, if $c\in B_{n}$, we have that $c=a\circ b$ or $c=\lambda_a(b)$ with $a,b\in B_{n-1}$. In the first case, by inductive hypothesis, $c\in <X>_{\circ}$. Now, suppose that $c=\lambda_a(b)$. By inductive hypothesis and \cref{ugu}, there exist $n,m\in \mathbb{N}$, $x_1,...,x_m,y_1,...,y_n\in X$ such that $a=x_1\circ...\circ x_m$ and $b=y_1+...+ y_n$. Therefore, we obtain $c=\lambda_{x_1\circ...\circ x_m}(y_1)+...+\lambda_{x_1\circ...\circ x_m}(y_n)$. Since  $\lambda_{x_1\circ...\circ x_n}(y_i)=\sigma_{x_1}^{-1}...\sigma_{x_n}^{-1}(y_i)\in X $ for all $i=1,...,n$, by \cref{ugu} we obtain $c\in <X>_{\circ}$, therefore the statement follows.
\end{proof}

\noindent Now, we provide a lemma of crucial importance for the main theorem of this section. It shows that if $x$ is a generator of a skew brace $B$, then the q-cycle set generated by $x$ generates multiplicatively $B$.

\begin{lemma}\label{genonec}
 Let $B$ be a finite one-generator skew brace, $x\in B$ such that $B=B(x)$, and $C:=<x>$. Then, $B=<C>_{\circ}$.
\end{lemma}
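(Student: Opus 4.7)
The plan is to prove by induction on $n \in \mathbb{N}$ that $B_n \subseteq <C>_{\circ}$, where $(B_n)_{n}$ is the sequence defined immediately before \cref{oneg}. Combined with $B(x) = \bigcup_{n} B_n$ from \cref{oneg}, this yields $B = B(x) \subseteq <C>_{\circ}$; the reverse inclusion is obvious. The base case $B_0 = \{0, x\} \subseteq <C>_{\circ}$ is immediate, and if $c = a \circ b$ with $a, b \in B_{n-1}$ then the inductive hypothesis gives the conclusion at once, since $<C>_{\circ}$ is closed under $\circ$.

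The substantive case is $c = \lambda_a(b)$ with $a, b \in B_{n-1}$. By induction both $a$ and $b$ lie in $<C>_{\circ}$, and by \cref{ugu} applied to $C$ we have $<C>_{\circ} = <C>_{+}$, so by finiteness of $B$ we can write $a = c_1 \circ \cdots \circ c_m$ and $b = z_1 + \cdots + z_k$ with all $c_i, z_j \in C$. The identity $\lambda_c = \sigma_c^{-1}$, which follows from the convention $c \cdot y := \lambda_{c^-}(y)$ together with $\lambda_{c^-} = \lambda_c^{-1}$, combined with the homomorphism property of $\lambda : (B, \circ) \to \Aut(B,+)$, yields
$$\lambda_a(b) \;=\; \sigma_{c_1}^{-1} \cdots \sigma_{c_m}^{-1}(z_1) \;+\; \cdots \;+\; \sigma_{c_1}^{-1} \cdots \sigma_{c_m}^{-1}(z_k)$$
after distributing across the additive automorphism $\lambda_a$. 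Since $C$ is a sub-q-cycle set, each $\sigma_{c_i}$ restricts to a permutation of the finite set $C$, and so does $\sigma_{c_i}^{-1}$; thus every summand on the right lies in $C$, and the whole expression lies in $<C>_{+} = <C>_{\circ}$, completing the induction.

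The main obstacle is precisely this $\lambda_a(b)$ step. \emph{A priori} it is not clear that $<C>_{\circ}$ should be $\lambda_a$-stable for $a \in <C>_{\circ}$, because $\lambda_a$ is an automorphism of $(B, +)$ rather than of $(B, \circ)$. The bridge is \cref{ugu}: re-expressing the $\circ$-subgroup additively lets the additivity of $\lambda_a$ interact with the closure of $C$ under the $\sigma$-maps, producing the required containment. Finiteness of $B$ is invoked both to drop $\circ$-inverses in the expression of $a$ and to guarantee that each $\sigma_{c_i}$ restricts to an honest permutation of $C$.
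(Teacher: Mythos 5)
Your proposal is correct and follows essentially the same route as the paper: induction on the sets $B_n$, with the $\lambda_a(b)$ case handled by invoking \cref{ugu} to write $a$ multiplicatively and $b$ additively over $C$, distributing the additive automorphism $\lambda_a = \sigma_{c_1}^{-1}\cdots\sigma_{c_m}^{-1}$ across the sum, and using the $\sigma^{-1}$-stability of the finite sub-q-cycle set $C$ to land back in $<C>_{+} = <C>_{\circ}$. The only cosmetic difference is that the paper phrases the inductive claim as ``every element of $B_n$ is a $\circ$-product of elements of $C$'' rather than ``$B_n \subseteq <C>_{\circ}$,'' which is equivalent by finiteness.
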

 
\begin{proof}
By \cref{oneg} and the finiteness of $B$, it is sufficient to show, by induction on $n$, that every element of $B_n$ can be written as $x_1\circ...\circ x_k$ for some $k\in \mathbb{N}$ and $x_1,...,x_k\in C$. If $n=0$ the claim is trivial, and if $a,b\in B_{n-1}$, clearly $a\circ b$ can be written in the desired form. It remains to show that $\lambda_a(b)$ can be written as $z_1\circ...\circ z_t$ for some $t\in \mathbb{N}$, and $z_1,...,z_t\in C$. By \cref{ugu},  $b=y_1+...+ y_u$ and $a=x_1\circ...\circ x_v$ for some $u,v\in \mathbb{N}$ and $x_1,...,x_v,y_1,...,y_u\in C$, and hence we have $\lambda_a(b)=\lambda_{x_1}...\lambda_{x_u}(y_1)+...+ \lambda_{x_1}...\lambda_{x_v}(y_u)$. Since $x_i,y_j\in C$ for all $i\in \{1,...,v\},j\in \{1,...,u\}$, by \cref{descr} we have that $\lambda_{x_1}...\lambda_{x_u}(y_i)=\sigma^{-1}_{x_1}...\sigma^{-1}_{x_u}(y_i)\in C $ for all $i\in \{1,...,u\}$, and therefore the claim follows by \cref{ugu}.

\end{proof}

\noindent The following theorem and corollary, that answer affirmatively to \cite[Question 6.8]{smock} and partially extend \cite[Corollary 4.7]{castelli2023studying} in the finite case, provide the precise link between finite one-generator skew left braces and indecomposable q-cycle sets. 

\begin{theor}\label{skewone}
    Let $B$ be a finite skew brace and $X$ an indecomposable sub-q-cycle set of $B$. Then:
    \begin{itemize}
        \item[1)] $B=B(x)$ with $x\in X$ $\Longleftrightarrow$ $X$ is a transitive cycle base with $<x>=X$;
        \item[2)] $B=B(x)$ for all $x\in X $ $\Longleftrightarrow$ $X$ is a transitive cycle base with $X$ irreducible.
        
    \end{itemize}
\end{theor}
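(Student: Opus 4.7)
The approach is to prove (1) first and obtain (2) as an immediate consequence of (1) combined with \cref{critirr}.

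For the forward direction of (1), assume $B=B(x)$ for some $x \in X$. By \cref{transorb}, the orbit $O(x)$ of $x$ under the action on $B$ of the group $G$ generated by $\{\sigma_a,\delta_a : a \in B\}$ is a transitive cycle base of $B$. I will establish
\begin{align*}
\langle x \rangle \subseteq X \subseteq O(x) \subseteq \langle x \rangle,
\end{align*}
forcing all three sets to coincide and yielding that $X$ is a transitive cycle base with $\langle x \rangle = X$. The first inclusion is immediate, as $\langle x \rangle$ is the smallest sub-q-cycle set of $B$ containing $x$. The second follows from indecomposability of $X$: the subgroup of $G$ generated by $\{\sigma_a,\delta_a : a \in X\}$ already acts transitively on $X$, so $X \subseteq G\cdot x = O(x)$. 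The crucial third inclusion uses \cref{genonec}: since $B = \langle \langle x \rangle\rangle_\circ$, every $a \in B$ decomposes as $z_1\circ\cdots\circ z_m$ with $z_i \in \langle x \rangle$, and the (anti-)homomorphism properties of $\lambda$ and $\delta$ from $(B,\circ)$ into $\Aut(B,+)$ allow one to rewrite each $\sigma_a$ and each $\delta_a$ as a composition of the maps $\sigma_{z_i},\delta_{z_i}$ attached to elements of $\langle x \rangle$. A straightforward induction on the length of a defining word for an element of $O(x)$, using closure of $\langle x \rangle$ under $\sigma_z,\delta_z$ and (by finiteness of $B$) under their inverses for $z \in \langle x \rangle$, then gives $O(x) \subseteq \langle x \rangle$.

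For the reverse direction of (1), suppose $X$ is a transitive cycle base of $B$ with $\langle x \rangle = X$. Since $B(x)$ is a sub-skew brace of $B$, a direct check (using that $\sigma_a(b) = -a^- + a^- \circ b$ and $\delta_a(b)=a^-\circ b - a^-$) shows that $B(x)$ is closed under $\sigma$ and $\delta$, hence a sub-q-cycle set of $B$ containing $x$; thus $\langle x \rangle \subseteq B(x)$ and therefore $X \subseteq B(x)$. As $X$ additively generates $B$ (being a cycle base), we conclude $B = B(x)$. Part (2) then follows at once: by (1), $B = B(x)$ for every $x \in X$ is equivalent to $X$ being a transitive cycle base with $\langle x \rangle = X$ for every $x \in X$, which by \cref{critirr} is precisely the irreducibility of $X$.

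The main obstacle is the inclusion $O(x) \subseteq \langle x \rangle$ in the forward direction of (1): this is the step that genuinely requires bridging the multiplicative structure of $B$ (through \cref{genonec}) with the combinatorial closure properties of the sub-q-cycle set $\langle x \rangle$, via the group-theoretic behaviour of $\lambda$ and $\delta$; the other inclusions and the reverse direction are essentially formal consequences of the definitions and of \cref{transorb}.
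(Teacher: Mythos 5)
Your proposal is correct and follows essentially the same route as the paper: the forward direction of (1) rests on \cref{transorb} and \cref{genonec} (your explicit chain $\langle x\rangle\subseteq X\subseteq O(x)\subseteq\langle x\rangle$ just makes the paper's terser conclusion precise), the reverse direction shows $X\subseteq B(x)$ and then uses that a cycle base generates $B$, and (2) is deduced from (1) via \cref{critirr}. No substantive differences.
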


\begin{proof}
  First, we prove 1). If $B=B(x)$ with $x\in X$, by \cref{transorb} the orbit of $x$ is a transitive cycle base. Moreover, by \cref{genonec}, every element of $B$ can be written as $x_1\circ...\circ x_k$ for some $k\in \mathbb{N}$ and  $x_1,...,x_k\in <x>$. This implies that $\lambda_{a}(x)\in <x>$ and $\delta_a(x)\in <x>$ for all $a\in B$, therefore $<x>=X$. Conversely, if $X$ is a transitive cycle base with $<x>=X$, then we have that $X\subseteq B(x) $. Indeed, $x\in B(x)$, and if $\mathcal{C}_{n-1}\subseteq  B(x)$ then $-a,a^-\in B(x)$, $\sigma_a(b)=\lambda_{a^-}(b)\in B(x)$ and $\delta_a(b)=\lambda_{a^-}(-a+b+a)\in B(x)$ for all $a,b\in \mathcal{C}_{n-1}$, therefore $\mathcal{C}_n\subseteq B(x)$. Hence, by induction on $n$ we have $X=\bigcup_{n\in \mathbb{N}} \mathcal{C}_{n}\subseteq B(x)$ and this implies $B=<X>_{\circ}\subseteq B(x)$. Part 2) follows by the first one and \cref{critirr}.
\end{proof}

\begin{cor}\label{caraonegen2}
    A skew brace $B$ is a one-generator skew brace if and only if there exists an element $x\in B$ such that the orbit $X$ with respect to the action of the subgroup of $Sym(B)$ generated by the set $\{\sigma_a,\delta_a\mid a\in B\}$, is a transitive cycle base and $X=<x>$. 
\end{cor}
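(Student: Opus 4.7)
The plan is to derive this corollary directly from \cref{skewone}, since it is essentially a reformulation of part 1) of that theorem in intrinsic terms (avoiding the hypothesis ``$X$ is an indecomposable sub-q-cycle set of $B$'' by building $X$ from scratch as an orbit).

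For the forward direction, I would assume $B=B(x)$ for some $x\in B$ and apply \cref{transorb} to conclude that the orbit $X$ of $x$ under the group generated by $\{\sigma_a,\delta_a\mid a\in B\}$ is a transitive cycle base. Being a single orbit, $X$ is an indecomposable sub-q-cycle set of $B$, so part 1) of \cref{skewone} applies and gives $X=\seq{x}$.

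For the reverse direction, I would start from an element $x\in B$ whose orbit $X$ is a transitive cycle base equal to $\seq{x}$. Since $X$ is a single orbit under the $\sigma$-$\delta$ action, it is indecomposable; it is clearly a sub-q-cycle set of $B$ containing $x$, and $\seq{x}=X$. Then part 1) of \cref{skewone} (reverse implication) yields $B=B(x)$.

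I do not anticipate a genuine obstacle, since both directions reduce to verifying the hypotheses of \cref{skewone}; the only thing to be careful about is observing that the orbit $X$ is \emph{ipso facto} an indecomposable sub-q-cycle set of $B$ (so that \cref{skewone} is applicable), which is immediate from the definition of transitive cycle base together with the characterization of indecomposability recalled just after \cref{descr2}.
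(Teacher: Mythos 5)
Your proposal is correct and matches the paper's own proof: both directions reduce to \cref{transorb} together with part 1) of \cref{skewone}, using the observation (already noted in Section 1) that a transitive cycle base is an indecomposable sub-q-cycle set of $B$.
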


\begin{proof}
    If $B=B(x)$ for some $x\in B$, the necessity follows from \cref{transorb} and \cref{skewone}. Conversely, since transitive cycle bases of $B$ are indecomposable sub-q-cycle sets of $B$, the sufficiency follows by \cref{skewone}.
\end{proof}

\section{Applications on braces and cycle sets}\label{ultsec}

In this section, we focus on braces and cycle sets, and we present examples and results closely related to the ones mainly obtained in \cite{rump2020one} and \cite{smock}.\\
Recall that, given a cycle set $X$, one can define a brace on the permutation group $\mathcal{G}(X)$ generated by the set $\{\sigma_x\mid x\in X\}$, known as the \emph{permutation brace} of $X$. Moreover, if $X$ is irretractable, we have that $\mathcal{G}(X)$ admits a transitive cycle base that is isomorphic to $X$ as a cycle set (see \cite{rump2020} for further details).

\subsection{One generator braces and cycle sets}

As we mentioned in Section $1$, a transitive cycle base is always an indecomposable q-cycle set. In the case of cycle sets, a kind of converse holds.

\begin{lemma}[Proposition 2.12, \cite{smock} - Proposition 9, \cite{rump2020one}]\label{trsmock}
    Every finite indecomposable cycle set $X$ is a transitive cycle base of a finite brace.
\end{lemma}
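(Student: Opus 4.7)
The plan is to realize $X$ as a cycle base inside its structure brace and then descend to a suitable finite quotient.

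First, I would invoke the Etingof-Schedler-Soloviev construction. To the cycle set $(X,\cdotp)$ one associates its structure brace $G(X)$: as an additive group it is free abelian of rank $|X|$, and the multiplicative structure $\circ$ is determined by extending the $\lambda$-action of $X$. The tautological inclusion $\iota\colon X\hookrightarrow G(X)$ identifies $X$ with a $\mathbb{Z}$-basis of $(G(X),+)$, and a direct inspection shows that $\iota(X)$ is a cycle base of $G(X)$ whose induced cycle set structure coincides with the original one on $X$. Since $X$ is indecomposable, $\iota(X)$ is a single orbit under the action of the group generated by $\{\lambda_a,\delta_a\mid a\in G(X)\}$, so it is already a transitive cycle base; only the finiteness of the ambient brace remains to be arranged.

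Second, I would descend to a finite brace. The homomorphism $\lambda\colon (G(X),\circ)\to\Aut(G(X),+)$ has image inside the finite permutation group $\mathcal{G}(X)\leq\Sym(X)$, which is finite because $X$ is. Leveraging this, I would exhibit an ideal $I\unlhd G(X)$ of finite index that does not collapse $\iota(X)$. A natural candidate is $I=NG(X)^+$ for an integer $N$ chosen sufficiently divisible: it is additively closed, $\lambda$-invariant because each $\lambda_a$ is a $\mathbb{Z}$-linear automorphism of the free abelian additive group (hence preserves $N\mathbb{Z}^X$), and a short computation with the brace identity $a\circ b=a+\lambda_a(b)$ together with the finiteness of the $\lambda$-image yields multiplicative normality. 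Setting $B:=G(X)/I$ then gives a finite brace.

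Third, I would verify the embedding. The composition $\bar\iota\colon X\hookrightarrow G(X)\twoheadrightarrow B$ is injective because distinct basis vectors remain distinct modulo any $N\geq 2$. Its image additively generates $B$ (it contains the image of a $\mathbb{Z}$-basis of $(G(X),+)$), is closed under the $\lambda$- and $\delta$-actions of $B$ since these descend from the corresponding actions on $G(X)$, and forms a single orbit because transitivity is preserved when quotienting by an ideal. Hence $\bar\iota(X)$ is a transitive cycle base of the finite brace $B$.

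The hard part, I expect, will be the ideal verification in the second step: one must pick $N$ large enough (for instance, a multiple of an invariant measuring how far the $\lambda$-action on $(\mathbb{Z}/N\mathbb{Z})^X$ deviates from what the quotient $(G(X)/I,\circ)$ requires) so that $NG(X)^+$ really is normal in $(G(X),\circ)$, which means controlling conjugation in $\circ$ via the additive structure and the $\lambda$-action. Once such an ideal is produced, everything else reduces to tracking how the cycle set structure on $X$ transfers to $\iota(X)$ and then to $\bar\iota(X)$.
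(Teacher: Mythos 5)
The paper does not prove this lemma at all: it is imported verbatim as Proposition 2.12 of \cite{smock} (equivalently Proposition 9 of \cite{rump2020one}), so there is no in-paper argument to compare against. Your reconstruction is essentially the standard proof from that literature and its overall architecture is sound: pass to the structure brace $G(X)$, whose additive group is free abelian on $X$ and in which $\iota(X)$ is a transitive cycle base inducing the original cycle set structure, then kill a finite-index ideal that stays injective on the basis. The one genuine gap is the point you yourself flag, and it deserves to be resolved rather than deferred: for an arbitrary $N\geq 2$ the subgroup $N\mathbb{Z}^{(X)}$ is \emph{not} normal in $(G(X),\circ)$. Computing $a\circ b\circ a^{-}=\lambda_a(b)+a-\lambda_a\lambda_b\lambda_a^{-1}(a)$ for $b\in N\mathbb{Z}^{(X)}$ shows that normality forces $\lambda_b=\id$ on the basis, i.e.\ you need $N\mathbb{Z}^{(X)}\subseteq\ker\lambda=\mathrm{Soc}(G(X))$. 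This is achievable: $\mathrm{Soc}(G(X))$ is an ideal whose additive index equals its multiplicative index $|\mathcal{G}(X)|<\infty$, so taking $N$ to be any multiple of $|\mathcal{G}(X)|$ (or of the exponent of $\mathbb{Z}^{(X)}/\mathrm{Soc}(G(X))$) makes $\lambda_b=\id$ for all $b\in N\mathbb{Z}^{(X)}$, whence $a\circ b\circ a^{-}=\lambda_a(b)\in N\mathbb{Z}^{(X)}$ and $N\mathbb{Z}^{(X)}$ is an ideal. With that choice fixed, the remaining verifications you list (injectivity of $X$ into the quotient, additive generation, stability and transitivity of the $\lambda$-orbit) all go through as you describe, so your argument becomes a complete, self-contained proof of the cited result.
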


\noindent The previous lemma, together with our results, yields the following theorem.

\begin{theor}\label{discuss1}
    Let $X$ be a cycle set, and $x\in X$. Then, the following are equivalent.
    \begin{itemize}
        \item[1.] $X=<x>$;
        \item[2.] there is a finite brace $B$ such that $X\subseteq B $,  $X=\{\lambda_b(x)\mid b\in B\}$, and $B=B(x)$.
    \end{itemize}
\end{theor}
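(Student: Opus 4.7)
The plan is to reduce both implications to \cref{skewone}(1), which for braces relates the property $B=B(x)$ to the existence of a transitive cycle base generated (as a q-cycle set) by $x$; \cref{trsmock} will serve as the bridge allowing us to pass between an abstract cycle set and a cycle base inside an ambient brace.

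For $(2)\Rightarrow(1)$, assume a finite brace $B=B(x)$ is given with $X=\{\lambda_b(x)\mid b\in B\}$. Since $(B,+)$ is abelian, $\sigma_a=\delta_a=\lambda_{a^{-}}$, so $X$ coincides with the orbit of $x$ under the subgroup of $\Sym(B)$ generated by $\{\sigma_a,\delta_a\mid a\in B\}$; in particular it is an indecomposable sub-q-cycle set of $B$ containing $x$. The forward direction of \cref{skewone}(1) then gives $<x>=X$.

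For $(1)\Rightarrow(2)$, suppose $X=<x>$. By \cref{descr2} the cycle set $X$ is indecomposable, hence by \cref{trsmock} there is a finite brace $B$ inside which (an isomorphic copy of) $X$ is a transitive cycle base. Identifying $X$ with this copy yields $X\subseteq B$ and $X=\{\lambda_b(x)\mid b\in B\}$, so only $B=B(x)$ remains to be established, for which we wish to invoke the reverse direction of \cref{skewone}(1). To do so we must know that the sub-q-cycle set of $B$ generated by $x$ equals $X$; but since $X$ is a sub-q-cycle set of $B$ containing $x$, the inclusion $<x>_{B}\subseteq X$ is immediate, and since $<x>_{B}$ is in turn a sub-q-cycle set of $X$ containing $x$, our hypothesis forces $X=<x>_{X}\subseteq <x>_{B}$. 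Thus $X$ is an indecomposable (transitive-cycle-base) sub-q-cycle set of $B$ with $<x>=X$, and \cref{skewone}(1) delivers $B=B(x)$.

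The most delicate point is the bookkeeping between $<x>$ computed inside the abstract cycle set $X$ and $<x>$ computed inside the ambient brace $B$; once this identification is made, the theorem reduces to a single application of \cref{skewone}(1) together with the realization result \cref{trsmock}.
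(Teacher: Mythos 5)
Your proof is correct and follows essentially the same route as the paper: \cref{descr2} and \cref{trsmock} for $(1)\Rightarrow(2)$, and \cref{skewone}(1) in both directions, with your careful identification of $<x>$ computed in $X$ versus in $B$ being a point the paper glosses over. The only step worth tightening is in $(2)\Rightarrow(1)$: that the orbit $X$ is an \emph{indecomposable} sub-q-cycle set of $B$ is not automatic from its being a $\lambda$-orbit --- one needs that $X$ generates $B$, so that the permutations $\sigma_y$ with $y\in X$ alone already act transitively on $X$; this is exactly what \cref{transorb} (which the paper invokes at this point) extracts from the hypothesis $B=B(x)$.
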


\begin{proof}
    If $X=<x>$, by \cref{descr2} it is indecomposable and by \cref{trsmock} it is the transitive cycle base of a finite brace $B$. By 1) of \cref{skewone} $B=B(x)$.\\
    Conversely, suppose that there is a finite brace $B$ such that $X\subseteq B$, $X=\{\lambda_b(x)\mid b\in B\}$, and $B=B(x)$. By \cref{transorb}, $X$ is a transitive cycle base of $B$ and hence an indecomposable sub-cycle set, therefore by 1) of \cref{skewone} $X=<x>$.   
\end{proof}



\noindent In \cite[Theorem 6.5]{smock}, Agata and Alicja Smocktunowitcz showed that a cycle set $X$ is indecomposable and of finite  multipermutation level if and only if there is a brace $B$  having $X$ as a transitive cycle base, such that $B=B(x)$ for all $x\in X$ and $B^{(m)}=\{0\}$ for some $m\in \mathbb{N}$, where $B^{(m)}$ is defined inductively by $B^{(1)}:=B$ and $B^{(m)}:=<\{-a+a\circ b-b\mid a\in B^{(m-1)}, b\in B \}>_{+}$. In \cite[Question 6.7]{smock} they asked if the hypothesis on the multipermutation level can be dropped. By \cite[Proposition 6]{cedo2017yang} and \cite[Lemma 6.4]{smock}, if $X$ does not have finite multipermutation level, then $B^{(m)}\neq \{0\}$ for all $m\in \mathbb{N}$, hence a possible  ``weak'' version of this theorem cannot consider the property $B^{(m)}=\{0\}$. Even with this removal, Theorem 6.5 of \cite{smock} is not true in general: indeed, in \cite{rump2020one} Rump constructed a family $\mathcal{M}$ of cycle sets of size $n^4$, depending on suitable representations of the first Weyl algebra on a field $K$ of size $n$ and of characteristic $2$, such that every brace $B$ having $X\in \mathcal{M}$ as a transitive cycle base satisfies $B\neq B(x)$ for all $x\in X$. Here, we use the same idea of Rump, together with the results of the previous sections, to provide similar examples of size $n^2$. While Rump used a  ``brace-theoretical'' arguments, our proof will be completely different and will be a simple application of the results previously obtained.

\begin{prop}\label{counterex}
    Let $K$ be a field of characteristic $2$ and let $P,Q\in M_2(K)$ given by
$P:=
\begin{bmatrix}
    1       & 1  \\
    0       & 1 
\end{bmatrix}
$ and $Q:=
\begin{bmatrix}
    1       & 0  \\
    1       & 1 
\end{bmatrix}.
$ Let $(K\times K,\cdotp)$ be the binary operation given by $x\cdotp y:=Q^{-1}x+Py$ for all $x,y\in K\times K$. Then, $(K\times K,\cdotp)$ is an indecomposable and irretractable cycle set and
$$<(x_1,x_2)^T>=\begin{cases}\{(x_1,x_2)^T\}\quad if \quad x_1=x_2 \\ \{(x_1,x_2)^T,(x_1,x_1)^T,(x_2,x_2)^T,(x_2,x_1)^T\}\quad if \quad x_1\neq x_2. \end{cases}$$ 
\end{prop}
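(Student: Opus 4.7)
The plan is to verify the cycle set axioms, then read off irretractability and indecomposability from the invertibility of $Q$, and finally identify the sub-cycle set generated by $(x_1,x_2)^T$ via a direct computation.

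First I would use that $\operatorname{char} K = 2$ forces $P^2 = Q^2 = I$, so $P = P^{-1}$, $Q = Q^{-1}$, and the operation reduces to $x \cdot y = Qx + Py$. Bijectivity of $\sigma_x \colon y\mapsto Qx+Py$ follows from invertibility of $P$, while the squaring map $\mathfrak{q}(x) = (P+Q)x$ is the coordinate swap $(x_1,x_2)^T\mapsto(x_2,x_1)^T$, hence bijective, so the non-degeneracy conditions hold. To establish the cycle set identity $(x\cdot y)\cdot(x\cdot z) = (y\cdot x)\cdot(y\cdot z)$, I would expand both sides as linear forms in $x, y, z$: using $P^2 = Q^2 = I$ the identity collapses to $(I + PQ)(x - y) = QP(x - y)$, which must hold for all $x, y$; this amounts to the single matrix equality $I + PQ = QP$, verified from the $2\times 2$ matrices directly.

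Irretractability is immediate: $\sigma_x = \sigma_y$ yields $Qx = Qy$, hence $x = y$ by invertibility of $Q$. For indecomposability, given any $y, z \in K\times K$, the element $u := Q(z + Py)$ satisfies $\sigma_u(y) = z$, so the permutation group generated by the maps $\sigma_v$ already acts transitively on $K \times K$ in a single step, and by the characterization of indecomposability recalled in \cref{studsol} this suffices.

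For the generator description I would split by cases. When $x_1 = x_2$, the calculation $(x_1,x_1)^T \cdot (x_1,x_1)^T = (P + Q)(x_1,x_1)^T = (x_1,x_1)^T$ shows that this element is $\sigma$-fixed, hence $<(x_1,x_1)^T> = \{(x_1,x_1)^T\}$. When $x_1 \neq x_2$, label the four candidate elements $a := (x_1,x_2)^T$, $b := (x_1,x_1)^T$, $c := (x_2,x_2)^T$, $d := (x_2,x_1)^T$ and let $S := \{a,b,c,d\}$. The plan is to compute the sixteen products $u \cdot v = Qu + Pv$ for $u, v \in S$ and check that each lies in $S$; the coordinates of $Qu$ and $Pv$ always belong to $\{0, x_1, x_2, x_1+x_2\}$, and in characteristic $2$ the sums collapse back to one of the four listed patterns. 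Closure under $\sigma_u^{-1}$ is then automatic, since $\sigma_u$ is an injection of the ambient space restricting to a map $S \to S$, hence a bijection of the finite set $S$. Finally the chain $a\cdot a = d$, $a \cdot d = c$, $d \cdot a = b$ shows $\{b, c, d\} \subseteq <a>$, giving $<a> = S$.

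The main labour is the sixteen-product bookkeeping in the second case; this is mechanical linear algebra over $K$ in characteristic $2$, and it appears to be the one step that cannot be replaced by a purely structural argument.
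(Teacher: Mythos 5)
Your proof is correct, and for the heart of the proposition --- the description of $<(x_1,x_2)^T>$ --- it follows essentially the same route as the paper: compute $\mathcal{C}_0,\mathcal{C}_1,\mathcal{C}_2$ (your chain $a\cdot a=d$, $a\cdot d=c$, $d\cdot a=b$ is exactly this) and then verify that the four-element set is closed under the operation, so that \cref{descr} finishes the argument; your parity observation that a sum of an odd number of terms from $\{x_1,x_2\}$ in characteristic $2$ again lies in $\{x_1,x_2\}$ is a clean way to dispatch the sixteen products at once. Where you genuinely diverge is the first part of the statement: the paper simply cites the first part of Rump's Example 2 in \cite{rump2020one} for the facts that $(K\times K,\cdotp)$ is an indecomposable irretractable cycle set, whereas you verify everything from scratch --- reducing the cycle set axiom to the single matrix identity $I+PQ=QP$ (which holds for these $P,Q$ in characteristic $2$), getting irretractability from injectivity of $Q$, and indecomposability from the explicit solution $u=Q(z+Py)$ of $\sigma_u(y)=z$. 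This costs a little extra computation but makes the proof self-contained, which is arguably preferable; all the individual computations you outline check out.
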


\begin{proof}
    As in the first part of \cite[Example 2]{rump2020one} one can easily show that $(K\times K,\cdotp)$ is an indecomposable and irretractable cycle set. Using the same notation as in \cref{descr}, we have that 
    $$\mathcal{C}_0:=\{(x_1,x_2)^T \}$$
    $$\mathcal{C}_1:=\{(x_1,x_2)^T,(x_1,x_2)^T\cdotp (x_1,x_2)^T \}=\{(x_1,x_2)^T,(x_2,x_1)^T \}$$
    \begin{eqnarray}
        \mathcal{C}_2&:=&\mathcal{C}_1\cup\{\mathfrak{q}((x_1,x_2)^T),(x_1,x_2)^T\cdotp (x_2,x_1)^T,(x_2,x_1)^T\cdotp (x_1,x_2)^T,\mathfrak{q}((x_2,x_1)^T)\} \nonumber \\
        &=& \{ (x_1,x_2)^T,(x_2,x_1)^T,(x_2,x_2)^T,(x_1,x_1)^T)\}. \nonumber
    \end{eqnarray}
     Now, we have that $\mathcal{C}_n=\mathcal{C}_2$ for all $n>2$. Indeed, by a standard computation one can verify that $\sigma_{(x_1,x_2)^T}(\mathcal{C}_2)=\mathcal{C}_2$ and $\sigma_{(x_1,x_1)^T}(\mathcal{C}_2)=\mathcal{C}_2$; exchanging $x_1$ with $x_2$, we obtain also the equalities $\sigma_{(x_2,x_1)^T}(\mathcal{C}_2)=\mathcal{C}_2$ and $\sigma_{(x_2,x_2)^T}(\mathcal{C}_2)=\mathcal{C}_2$. Therefore, the statement follows by \cref{descr}.
\end{proof}

\noindent The previous proposition allows us to construct a family of indecomposable cycle sets that are not one-generator: indeed, every element generates a sub-cycle set of size at most $4$, hence it is sufficient to consider a cycle set $(K\times K,\cdotp)$ with $|K|>2$. In this way, we obtain the desired examples.

\begin{cor}\label{esemnot}
    Let $X:=(K\times K,\cdotp)$ be the cycle set of \cref{counterex} and let $B$ be a brace such that $X$ is one of its transitive cycle bases. Then, the following are equivalent:
    \begin{itemize}
        \item $|K|>2$;
        \item $B\neq B(x)$ for all $x\in X$.
    \end{itemize}
\end{cor}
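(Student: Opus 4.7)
The plan is to reduce the corollary to a cardinality comparison through part $1)$ of \cref{skewone}. Since $X$ is indecomposable by \cref{counterex} and is assumed to be a transitive cycle base of $B$, \cref{skewone}(1) says that $B = B(x)$ for some $x \in X$ if and only if $<x> = X$. Hence proving the stated equivalence amounts to showing that the generated sub-cycle set $<x>$ is a proper subset of $X$ for every $x \in X$ precisely when $|K|>2$.

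First I would establish the implication $|K|>2 \Rightarrow B\neq B(x)$ for all $x\in X$. \cref{counterex} provides the uniform upper bound $|<x>|\leq 4$ for every $x \in X$, while $|X|=|K|^2$. Since $K$ has characteristic $2$, the assumption $|K|>2$ forces $|K|\geq 4$, and therefore $|X|\geq 16 > 4 \geq |<x>|$. In particular $<x>\subsetneq X$ for every $x\in X$, so by \cref{skewone}(1) we conclude $B\neq B(x)$ for all $x\in X$.

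For the converse, I would argue by contraposition. Since $K$ is a field, $|K|\leq 2$ forces $|K|=2$, and hence $|X|=4$. Choosing any $x=(x_1,x_2)^T\in X$ with $x_1\neq x_2$, for instance $x=(0,1)^T$, \cref{counterex} yields $|<x>|=4=|X|$, so $<x>=X$. Applying \cref{skewone}(1) to this $x$ then gives $B=B(x)$, contradicting the assumption that $B\neq B(x)$ for all $x\in X$.

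No serious obstacle is anticipated: once \cref{counterex} supplies the explicit description of the sub-cycle sets generated by single elements, the argument is a short size count combined with the characterization of one-generation provided by \cref{skewone}. The existence of at least one brace $B$ admitting $X$ as a transitive cycle base is guaranteed by \cref{trsmock}, so the statement is not vacuous.
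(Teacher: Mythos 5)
Your proof is correct and follows essentially the same route as the paper: the paper's own proof is a one-line appeal to \cref{discuss1} (which is itself a repackaging of \cref{skewone}(1)) together with the size count from \cref{counterex}, and you have simply made that "standard argument" explicit. Invoking \cref{skewone}(1) directly is, if anything, slightly cleaner for the converse direction, since it applies to the given brace $B$ rather than merely asserting the existence of some brace.
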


\begin{proof}
    It follows by a standard argument, applying  \cref{discuss1} and \cref{counterex}.
\end{proof}

\noindent Even if in general Theorem 6.5 of \cite{smock} is not true when we remove $mpl(X)<+\infty$, we can weaken this hypothesis using the notion of irreducible cycle set. In particular, the following theorem partially extends \cite[Theorem 6.5]{smock}, and answers positively to \cite[Question 6.7]{smock}.

\begin{theor}
    Let $X$ be a cycle set. Then, the following are equivalent.
    \begin{itemize}
        \item[1.] $X$ is irreducible;
        \item[2.] there is a finite brace $B$ such that $X=\{\lambda_b(x)|b\in B\}$ and $B=B(x)$ for all $x\in X$.
    \end{itemize}
\end{theor}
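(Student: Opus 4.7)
The plan is to deduce the theorem directly from part (2) of \cref{skewone}, combined with the existence result \cref{trsmock} and the characterization of irreducibility in \cref{critirr}.

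For the implication $(1)\Rightarrow(2)$, I would first argue that an irreducible cycle set $X$ is indecomposable: by \cref{critirr} every $x\in X$ satisfies $<x>=X$, and $<x>$ is indecomposable by \cref{descr2}. Then \cref{trsmock} embeds $X$ as a transitive cycle base of a finite brace $B$, which automatically yields $X=\{\lambda_b(x)\mid b\in B\}$ for any $x\in X$. Since transitive cycle bases are indecomposable sub-q-cycle sets of $B$, the hypotheses of the ``if'' direction of part (2) of \cref{skewone} are met, and I conclude $B=B(x)$ for every $x\in X$.

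For the converse $(2)\Rightarrow(1)$, I would start from the hypothesized $B$ and observe that $X=\{\lambda_b(x)\mid b\in B\}$ is a $\lambda$-orbit, hence a transitive cycle base of $B$, hence in particular an indecomposable sub-q-cycle set. The ``only if'' direction of part (2) of \cref{skewone}, applied to $X$ and any $x\in X$, then delivers irreducibility of $X$ directly.

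There is no substantial obstacle here; the heavy lifting has been done in the proofs of \cref{skewone} and \cref{trsmock}. The only small point to keep track of is that statement (2) tacitly forces $X$ to be finite (as a subset of the finite brace $B$), which keeps us inside the finite-case hypotheses underlying \cref{critirr}, \cref{descr2}, and \cref{trsmock}, so no substantive finiteness issue arises.
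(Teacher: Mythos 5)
Your argument is correct and follows essentially the same route as the paper: irreducibility gives indecomposability, \cref{trsmock} realizes $X$ as a transitive cycle base, and part 2) of \cref{skewone} closes both directions. The only nuance is in the converse: a $\lambda$-orbit is not automatically a transitive cycle base (it must also generate $B$), so you should invoke \cref{transorb} --- which uses the hypothesis $B=B(x)$ --- exactly as the paper does at that step.
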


\begin{proof}
    If $X$ is irreducible, then it is indecomposable and by \cref{trsmock} it is the transitive cycle base of a finite brace $B$. By 2) of \cref{skewone}, $B=B(x)$ for all $x\in X$.\\
    Conversely, suppose that there is a finite brace $B$ such that $X=\{\lambda_b(x)|b\in B\}$ and $B=B(x)$ for all $x\in B$. By \cref{transorb}, $X$ is a transitive cycle base of $B$ and hence an indecomposable sub-cycle set, therefore by 2) of \cref{skewone} $X$ is irreducible.   
\end{proof}

Combining \cref{transorb} and \cref{skewone}, we obtain the correspondence between one-generator braces and indecomposable cycle sets.

\begin{cor}\label{transbrace}
      Let $B$ be a finite brace. Then, the following are equivalent.
    \begin{itemize}
        \item[1.] $B$ is a one-generator brace;
        \item[2.] $B$ has a transitive cycle base $X$ such that $X$ is a one-generator cycle set.
    \end{itemize}
\end{cor}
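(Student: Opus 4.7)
The plan is to obtain this corollary as a direct consequence of \cref{transorb} combined with part $1)$ of \cref{skewone}, using the fact (observed just before \cref{descr}, in the discussion of cycle bases) that every transitive cycle base of $B$ is an indecomposable sub-q-cycle set of $B$. Since a brace is a skew brace, \cref{skewone} applies verbatim here.

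For the implication $(1) \Rightarrow (2)$, I would assume that $B = B(x)$ for some $x \in B$. An immediate application of \cref{transorb} shows that the orbit $X$ of $x$ under the subgroup of $\Sym(B)$ generated by $\{\sigma_a, \delta_a \mid a \in B\}$ is a transitive cycle base, and in particular an indecomposable sub-q-cycle set of $B$ containing $x$. The forward direction of part $1)$ of \cref{skewone} then gives $<x> = X$, so that $X$ is a one-generator cycle set.

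For the converse $(2) \Rightarrow (1)$, I would start from a transitive cycle base $X$ of $B$ satisfying $X = <x>$ for some $x \in X$. Since $X$ is a transitive cycle base, it is an indecomposable sub-q-cycle set of $B$, so the converse direction of part $1)$ of \cref{skewone} applies and yields $B = B(x)$, i.e.\ $B$ is one-generator.

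There is no real obstacle: the corollary is essentially a repackaging of previous results. The only bookkeeping points are to invoke both directions of \cref{skewone}$(1)$ appropriately, and to recall that transitive cycle bases are automatically indecomposable sub-q-cycle sets so that the hypothesis of \cref{skewone} is satisfied in both implications.
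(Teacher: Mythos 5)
Your proposal is correct and follows exactly the paper's route: the corollary is stated as a direct combination of \cref{transorb} and part 1) of \cref{skewone}, using the fact that a transitive cycle base is an indecomposable sub-q-cycle set (this is also precisely how \cref{caraonegen2} is proved). Nothing is missing.
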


\subsection{Some numerical data}

Using the GAP package \emph{YangBaxter} \cite{YBGAP}, we implemented a GAP code (available upon request to the author) to check when a cycle set $X$ is generated by an element $x\in X$.\\
The algorithm proceeds as follows:
\begin{itemize}
    \item[1)] We give as input a triple $(X,x,[x])$, where $X$ is a cycle set and $x$ is an element of $X$;
    \item[2)] Using the same notation of \cref{descr}, given $\mathcal{C}_{n-1}$ we compute $ \mathcal{C}_{n}$;
    \item[3)] \begin{itemize}
        \item[a)] If $\mathcal{C}_{n-1}\neq \mathcal{C}_{n}$, we go back to step 2) with input $(X,x,\mathcal{C}_n)$.
        \item[b)] If $\mathcal{C}_{n-1}=\mathcal{C}_{n}$, then $<x>=\mathcal{C}_{n}$ and hence we have that $X$ is generated by $x$ if and only if $X=\mathcal{C}_{n}$.        
    \end{itemize}  
\end{itemize}

Among the cycle set of size $<10$, we found only two indecomposable cycle sets that are not one-generator, both of size $8$ (note that  ``almost'' all the indecomposable cycle sets constructed in \cref{counterex} are not one-generator, and the smallest ones have size $16$).  The first one, is given by $X_1:=\{1,2,3,4,5,6,7,8\}$ and 
$$\sigma_1:= (2,3)(4,6)(5,8) \qquad \sigma_5:=(1,2,7,8)(3,4)(5,6) $$
$$\sigma_2:= (1,4)(3,5)(6,7) \qquad \sigma_6:=(1,8,7,2)(3,4)(5,6)$$
$$ \sigma_3:=(1,4)(2,8)(6,7) \qquad  \sigma_7:=(1,2)(3,4,5,6)(7,8)$$
$$ \sigma_4:=(1,7)(2,3)(5,8) \qquad \sigma_8:=(1,2)(3,6,5,4)(7,8).$$
The second one, is given by $X_2:=\{1,2,3,4,5,6,7,8\}$ and 
$$\sigma_1:= (1,2)(3,4,8,6)(5,7) \qquad \sigma_5:=(1,3,7,8)(2,4)(5,6) $$
$$\sigma_2:= (1,2)(3,6,8,4)(5,7) \qquad \sigma_6:=(1,8,7,3)(2,4)(5,6)$$
$$ \sigma_3:= (1,2,7,5)(3,4)(6,8) \qquad  \sigma_7:=(1,3)(2,4,5,6)(7,8)$$
$$ \sigma_4:=(1,5,7,2)(3,4)(6,8) \qquad \sigma_8:=(1,3)(2,6,5,4)(7,8) $$

\noindent Using a combination of the algorithm above descripted and \cref{critirr}, we are able also to check if a cycle set is irreducible. If $X$ is a cycle set with $|X|\in \{2,3,5,6,7\}$, by \cref{coroll}, \cite[Theorem 2.12]{etingof1998set} and \cite[Theorem 4.5]{cedo2022indecomposable}, we have that $X$ is irreducible. In general, every irreducible cycle set is one-generator, but the converse is not necessarily true. If $|X|=4$, we have that all the indecomposable cycle sets are one-generator, but there are two indecomposable cycle sets that are not irreducible: they correspond to the irretractable ones. 
If $|X|=8$, there are $100$ indecomposable cycle sets. Among these, there are $17$ one-generator cycle sets that are not irreducible, and $81$ irreducible cycle sets ($39$ of them are multipermutation cycle sets).
If $|X|=9$, there are $16$ indecomposable cycle sets. We found $15$ irreducible cycle sets ($13$ of them are multipermutation cycle sets) and $1$ one-generator cycle set that is not irreducible.

\medskip

Finally, we turn our attention to braces. By \cref{transbrace}, we have a simple method to determine when a brace $B$ is one-generator. Indeed, one can:
\begin{itemize}
    \item[1)] compute the transitive cycle bases of $B$ among the $\lambda$-orbits (if $B$ has not transitive cycle bases, it is not a one-generator brace).
    \item[2)] check by the algorithm above described if there are transitive cycle bases that are one-generator cycle sets.
\end{itemize}

In this way, we obtained a brace with a transitive cycle base that is not one-generator.

\begin{ex}
    Let $B$ be the permutation brace of $X_2$. Then, $B$ has size $32$ and has $2$ transitive cycle bases, both isomorphic to $X_2$. Hence, by \cref{transbrace}, $B$ is not a one-generator brace.
\end{ex}

Thanks to the cycle sets $X_1$ and $X_2$ and the examples provided in \cref{esemnot}, we can construct braces $B$ having a transitive cycle base $X$ such that $B(x)\neq B$ for all $x\in X$. It is natural asking if this fact allows us to claim that $B$ is not a one-generator brace. The following example shows that the answer is negative.

\begin{ex}
    Let $B$ be the permutation brace associated to $X_1$. Since $X_1$ is not one-generator, $B(x)\neq B$ for all $x\in X_1$. However, $B$ is a one-generator brace. Indeed, inspecting by GAP the transitive cycle bases of $B$, we have that it has a transitive cycle base of size $8$, which is isomorphic to the cycle set $X_3:=\{1,2,3,4,5,6,7,8\}$ given by
    $$\sigma_1:= (1,5,7,8) \qquad \sigma_5:=(1,8,7,5) $$
$$\sigma_2:= (1,4,7,6)(2,8)(3,5) \qquad \sigma_6:=(1,4)(2,5,3,8)(6,7)$$
$$ \sigma_3:=(2,6,3,4) \qquad  \sigma_7:=(1,4)(2,8,3,5)(6,7)$$
$$ \sigma_4:=(2,4,3,6) \qquad \sigma_8:=(1,6,7,4)(2,8)(3,5)$$
and since $X_3$ is a one-generator cycle set (it is generated by $1$), by \cref{caraonegen2} $B$ is a one-generator brace.
\end{ex}

\section{Acknowledgments}

The author is a member of GNSAGA (INdAM), and was partially supported
by the MAD project Cod. $ARS01\_00717$.\\
The author thanks S. Trappeniers for the clarifications on transitive cycle bases related to skew braces with a non-abelian additive group. 

\bibliographystyle{elsart-num-sort}
\bibliography{Bib}

\end{document}